\numberwithin{equation}{section}
\newcommand{\R}{\mathbb{R}}
\newcommand{\N}{\mathbb{N}}
\newcommand{\E}{\mathbb{E}}
\newcommand{\pr}{\mathbb{P}}
\newcommand{\e}{\operatorname{e}}
\newcommand{\dd}{\,{\mathrm d}}
\newcommand{\db}{{\mathrm d}}
\newcommand{\dual}{^{\ast}}
\newcommand{\lo}{\mathcal{L}}
\newcommand{\ham}{\mathcal{H}}
\newcommand{\eps}{\varepsilon}
\newcommand{\pt}{\partial}
\newtheorem{lemma}{Lemma}[section]
\newtheorem{thm}[lemma]{Theorem}
\newtheorem{remark0}[lemma]{Remark}
\newtheorem{eg0}[lemma]{Example}
\newenvironment{remark}{\begin{remark0}\rm}{\hspace*{\fill} $\square$
                        \end{remark0}}
\author[K. Habermann]{Karen Habermann}
\address{University of Bonn, Hausdorff Center for Mathematics,
  Endenicher Allee 62, 53115 Bonn, Germany.}
\email{habermann@iam.uni-bonn.de}
\thanks{Research supported by the German Research Foundation DFG
  through the Hausdorff Center for Mathematics.}
\title[Small-time fluctuations for a model class of hypoelliptic
diffusion bridges]
{Small-time fluctuations for the bridge in a model class of
  hypoelliptic diffusions of weak H\"ormander type}
\begin{document}
\begin{abstract}
  We study the small-time asymptotics for hypoelliptic diffusion
  processes conditioned by their initial and final positions, in a
  model class of diffusions
  satisfying a weak H\"ormander condition
  where the diffusivity is constant
  and the drift is linear. We show that, while the diffusion bridge can
  exhibit a blow-up behaviour in the small time limit, we can still
  make sense of suitably rescaled fluctuations which converge
  weakly. We explicitly describe the limit fluctuation process in
  terms of quantities associated to the unconditioned diffusion.
  In the discussion of examples, we also
  find an expression for the bridge from $0$ to $0$ in time $1$ of an
  iterated Kolmogorov diffusion.
\end{abstract}

\maketitle
\thispagestyle{empty}

\section{Introduction}
The small-time asymptotics for hypoelliptic diffusion processes can
depend crucially on the drift term. For instance, Ben Arous and
L\'eandre~\cite{expdecay1,expdecay2} showed that an interaction of the
flow of the drift vector field with the heat diffusion can lead to an
exponential decay of the heat kernel on the diagonal. The current
paper discusses and illustrates the effects the drift term can have on the
small-time fluctuations for hypoelliptic diffusion bridges.

Bailleul, Mesnager and Norris~\cite{BMN} studied the
small-time asymptotics of sub-Riemannian diffusion bridges outside the
cut locus.
Their analysis was extended by us to the diagonal, cf.~\cite{MAP}, to
describe the asymptotics of sub-Riemannian diffusion loops.
Both works are concerned with hypoelliptic diffusion processes
whose associated generators satisfy the so-called strong H\"ormander
condition and where the drift vector fields are nice enough to not
affect the small-time asymptotics. In continuation of this work, we
would like to analyse the small-time asymptotics for
hypoelliptic
diffusion bridges, where one assumes a weak H\"ormander condition
only. As a first step towards this goal, we determine the
small-time bridge fluctuations
for a model class of hypoelliptic diffusions
satisfying a weak H\"ormander condition, and we contrast our results with
\cite{BMN} and \cite{MAP}.

We consider the same model class for which Barilari and
Paoli~\cite{barilariandpaoli} describe the small-time heat kernel
expansion on the diagonal and give a geometric characterisation of the
coefficients in terms of curvature-like invariants. The corresponding
model class of hypoelliptic operators
already features in
the pioneering work of H\"ormander~\cite{hormander}, and
Lanconelli and Polidoro~\cite{evolution} study a notion of principal
part as well as the invariance with respect to suitable
groups of translations and dilations for this class of operators.

Fix $d,m\in\N$.
Let $A$ be a $d\times d$ matrix and $B$ be a $d\times m$ matrix
such that there exists $N\in\N$ with
\begin{equation}\label{kalman}
  \operatorname{rank}\left[B,AB,A^2B,\dots,A^{N-1}B\right]=d\;,
\end{equation}
where
$[B,AB,A^2B,\dots,A^{N-1}B]$ is the matrix formed of the columns
of $B,AB,A^2B,\dots,A^{N-1}B$.
Let $n$ denote the minimal $N$ satisfying~(\ref{kalman}).
We study the diffusion process whose
generator $\lo$ is the second order differential operator
on $\R^d$ given by
\begin{equation}\label{model}
  \lo=\sum_{j=1}^d\left(Ax\right)_j\frac{\pt}{\pt x_j}+
  \frac{1}{2}\sum_{j,k=1}^d
  \left(BB\dual\right)_{jk}\frac{\pt^2}{\pt x_j\pt x_k}\;.
\end{equation}
For the linear vector field $X_0$ and the constant vector fields
$X_1,\dots,X_m$ on $\R^d$ defined by
\begin{equation*}
  X_0=\sum_{j,k=1}^dA_{jk}x_k\frac{\pt}{\pt x_j}
  \quad\mbox{and}\quad
  X_i=\sum_{j=1}^dB_{ji}\frac{\pt}{\pt x_j}
  \quad\mbox{for } i\in\{1,\dots,m\}\;,
\end{equation*}
the operator $\lo$ rewrites as
\begin{equation*}
  \lo=X_0+\frac{1}{2}\sum_{i=1}^m X_i^2\;.
\end{equation*}
We further note that, for $i\in\{1,\dots,m\}$ and $k\in\N$,
\begin{equation}\label{brackets}
  \left(\operatorname{ad}_{X_0}\right)^k(X_i)=
  \sum_{j=1}^d(-1)^k\left(A^kB\right)_{ji}\frac{\pt}{\pt x_j}\;,
\end{equation}
where $\operatorname{ad}_{X_0}(Y)=[X_0,Y]$. Hence, putting
condition~(\ref{kalman}) on the matrices $A$ and $B$
ensures that any operator of the form~(\ref{model})
satisfies a weak H\"ormander condition. In control theory,
condition~(\ref{kalman}) is also known as the Kalman rank condition,
cf.~\cite[Section~2.3]{kalman_book}. As remarked in~\cite{evolution},
it is indeed of interest to study the operators of the
form~(\ref{model}) and its associated hypoelliptic diffusions
because they arise when linearising the
Fokker-Planck equation. Moreover, this model class contains some strongly
degenerate operators, see Section~\ref{ultra}.

In the analysis of the small-time fluctuations for the
corresponding
hypoelliptic diffusion bridges, it is of advantage that a diffusion
process with generator of the form~(\ref{model}) is always Gaussian
and in particular, that its bridge processes can be written down
explicitly. Additionally, unlike~\cite{BMN}, we do not come across any cut
locus phenomena for this class of diffusions.
Fix $x\in\R^d$ and let $\eps>0$. There exists a diffusion
process $(x_t^\eps)_{t\in[0,1]}$ starting
from $x$ and having generator $\eps\lo$. 
For $y\in\R^d$, let $(z_t^\eps(y))_{t\in[0,1]}$ be the
process obtained by conditioning $(x_t^\eps)_{t\in[0,1]}$
on $x_1^\eps=y$. An explicit expression for the bridge process
$(z_t^\eps(y))_{t\in[0,1]}$ is given in Lemma~\ref{modelbridge}. We
consider these diffusion bridges in the limit $\eps\to 0$.

Using the notion of the matrix exponential of a square matrix, we
set, for $t\in[0,1]$,
\begin{equation}\label{Gamma}
  \Gamma_t^\eps=\int_0^t\e^{-\eps sA}BB\dual\e^{-\eps sA\dual}\dd s\;.
\end{equation}
According to~\cite[Proposition~A.1]{evolution}, the Kalman rank
condition~(\ref{kalman}) implies that the square matrix
$\Gamma_t^\eps$ is invertible for all $t\in(0,1]$.
Let $(\phi_t^\eps(y))_{t\in[0,1]}$ be the 
deterministic path in $\R^d$ defined by
\begin{equation}\label{phi}
  \phi_t^\eps(y)=\e^{\eps tA}x+
  \e^{\eps tA}\Gamma_t^\eps\left(\Gamma_1^\eps\right)^{-1}
  \left(\e^{-\eps A}y-x\right)\;.
\end{equation}
We see that this path describes the leading order behaviour
of the diffusion bridge $(z_t^\eps(y))_{t\in[0,1]}$ as $\eps\to 0$.
Set
$\Omega^{0,0}=\{\omega\in C([0,1],\R^d)\colon\omega_0=0,\omega_1=0\}$.
\begin{thm}\label{LLN}
  For all $x,y\in\R^d$, the processes
  $(z_t^\eps(y)-\phi_t^\eps(y))_{t\in[0,1]}$ converge weakly
  as $\eps\to 0$ to the zero process on the set of continuous loops
  $\Omega^{0,0}$.
\end{thm}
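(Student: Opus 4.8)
The plan is to exploit that every process here is Gaussian, so that the whole statement collapses to a single elementary matrix estimate. Since $\lo$ has affine coefficients, $(x_t^\eps)_{t\in[0,1]}$ is the Gaussian process $x_t^\eps=\e^{\eps tA}(x+\sqrt{\eps}\,M_t^\eps)$, where $M_t^\eps:=\int_0^t\e^{-\eps sA}B\dd W_s$ is the driving Gaussian martingale, so that $\langle M^\eps\rangle_t=\Gamma_t^\eps$. Decomposing a Gaussian process conditioned on its terminal value into conditional mean plus independent fluctuation, a short computation using~\eqref{Gamma}--\eqref{phi} shows that the conditional mean of $x_t^\eps$ given $x_1^\eps=y$ equals $\phi_t^\eps(y)$, while the fluctuation is the \emph{centred} continuous Gaussian process (cf.\ Lemma~\ref{modelbridge})
\[
  R_t^\eps:=z_t^\eps(y)-\phi_t^\eps(y)\ \overset{\mathrm d}{=}\ \sqrt{\eps}\,\e^{\eps tA}\Bigl(M_t^\eps-\Gamma_t^\eps(\Gamma_1^\eps)^{-1}M_1^\eps\Bigr)\quad\text{(jointly in }t\text{)};
\]
this process lies in $\Omega^{0,0}$ (one checks the right-hand side vanishes at $t=0$ and $t=1$) and does not depend on $x$ or on $y$. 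As the limiting loop is deterministic, weak convergence of $R^\eps$ to it is equivalent to $\|R^\eps\|_\infty\to 0$ in probability, and that is what I would prove.

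Using $\|\e^{\eps tA}\|\le\e^{\|A\|}$ on $[0,1]$, it suffices to check that the two random variables $\sup_{0\le t\le 1}|M_t^\eps|$ and $\sup_{0\le t\le 1}\bigl|\Gamma_t^\eps(\Gamma_1^\eps)^{-1}M_1^\eps\bigr|$ stay bounded in probability uniformly in $\eps\in(0,1]$, since then $\|R^\eps\|_\infty$ is $\sqrt{\eps}$ times an $O_{\pr}(1)$ quantity and hence tends to $0$. The first is immediate from Doob's $L^2$-inequality together with $\E|M_1^\eps|^2=\operatorname{tr}\Gamma_1^\eps\le\e^{2\|A\|}\operatorname{tr}(BB\dual)$. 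The second is the only delicate point: under the weak H\"ormander condition $BB\dual$ is singular, so $\Gamma_1^\eps$ converges to the singular matrix $BB\dual$ and $\|(\Gamma_1^\eps)^{-1}\|\to\infty$ as $\eps\to 0$. I would defuse this by writing $\Gamma_t^\eps(\Gamma_1^\eps)^{-1}M_1^\eps=\bigl(\Gamma_t^\eps(\Gamma_1^\eps)^{-1/2}\bigr)\zeta^\eps$ with $\zeta^\eps:=(\Gamma_1^\eps)^{-1/2}M_1^\eps$: here $\Gamma_1^\eps$ is invertible by~\cite[Proposition~A.1]{evolution}, the vector $\zeta^\eps$ is standard Gaussian on $\R^d$ for every $\eps$ (so $\{|\zeta^\eps|\}$ is tight), and --- crucially --- the monotonicity $0\preceq\Gamma_t^\eps\preceq\Gamma_1^\eps$ for $t\in[0,1]$ forces $\Gamma_t^\eps(\Gamma_1^\eps)^{-1}\Gamma_t^\eps\preceq\Gamma_t^\eps$, whence
\[
  \bigl\|\Gamma_t^\eps(\Gamma_1^\eps)^{-1/2}\bigr\|^2
  =\bigl\|\Gamma_t^\eps(\Gamma_1^\eps)^{-1}\Gamma_t^\eps\bigr\|
  \le\|\Gamma_t^\eps\|\le\e^{2\|A\|}\|BB\dual\|
\]
uniformly in $t\in[0,1]$ and $\eps\in(0,1]$. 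So the second supremum is at most $\e^{\|A\|}\|BB\dual\|^{1/2}\,|\zeta^\eps|$, bounded in probability, which finishes the argument.

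The computation is short; the substance is the cancellation in the last display. Although $(\Gamma_1^\eps)^{-1}$ degenerates, the conditioning vector $M_1^\eps\sim\mathcal N(0,\Gamma_1^\eps)$ concentrates, at exactly the matching rate, on the directions that the inverse Gramian amplifies, and the comparison $\Gamma_t^\eps\preceq\Gamma_1^\eps$ makes this transparent while using none of the finer homogeneous (nilpotent-dilation) structure of $\lo$ --- that structure is rather what one would need to identify the \emph{non-trivial} weak limit of the suitably rescaled fluctuations. If one instead preferred a finite-dimensional-distributions-plus-tightness route, the same bounds give $\operatorname{Cov}(R_t^\eps)=\eps\,\e^{\eps tA}\Gamma_t^\eps(\Gamma_1^\eps)^{-1}(\Gamma_1^\eps-\Gamma_t^\eps)\e^{\eps tA\dual}$ of operator norm $O(\eps)$ uniformly in $t$, which takes care of the finite-dimensional marginals, with tightness read off from the displayed representation of $R^\eps$.
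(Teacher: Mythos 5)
Your proof is correct, and it takes a genuinely different route from the paper. The paper obtains Theorem~\ref{LLN} as an immediate corollary of the fluctuation result Theorem~\ref{CLT}: since $z_t^\eps(y)-\phi_t^\eps(y)=\eps^{1/2}D_\eps F_t^\eps$ and $(F_t^\eps)$ converges weakly to a well-defined limit, the unrescaled difference converges to zero. That argument therefore rests on all of Section~3 --- the adapted basis, the expansion of $U^\eps(r)$, the invertibility of $V$ (Lemma~\ref{invertV}) and the uniform expansion of $\alpha_t^\eps$. You instead prove the law of large numbers directly from the exact representation $z_t^\eps(y)-\phi_t^\eps(y)=\sqrt{\eps}\,\e^{\eps tA}\bigl(M_t^\eps-\Gamma_t^\eps(\Gamma_1^\eps)^{-1}M_1^\eps\bigr)$ (which is in fact an identity, not merely an equality in law, by~(\ref{nonscaledfluct})), and your key observation --- that $0\preceq\Gamma_t^\eps\preceq\Gamma_1^\eps$ yields $\Gamma_t^\eps(\Gamma_1^\eps)^{-1}\Gamma_t^\eps\preceq\Gamma_t^\eps$, so the a priori dangerous factor $\Gamma_t^\eps(\Gamma_1^\eps)^{-1/2}$ is bounded uniformly in $t$ and $\eps$ while $(\Gamma_1^\eps)^{-1/2}M_1^\eps$ is standard Gaussian --- is exactly the right cancellation and is easily justified (e.g.\ it is the statement that the conditional covariance of $M_t^\eps$ given $M_1^\eps$ is dominated by $\Gamma_t^\eps$, or it follows from $T^{-1}\preceq S^{-1}$ for $0\prec S\preceq T$ after a regularisation $S\mapsto S+\delta I$). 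Combined with Doob's inequality for $\sup_t|M_t^\eps|$ this gives $\|z^\eps(y)-\phi^\eps(y)\|_\infty=\sqrt{\eps}\,O_{\pr}(1)$, and convergence in probability to a deterministic limit is indeed equivalent to weak convergence. What your approach buys is a short, self-contained proof of the LLN that uses none of the homogeneous structure of $\lo$ (and would generalise readily); what it does not give, and what the paper's machinery is really for, is the identification of the nontrivial limit of the rescaled fluctuations in Theorem~\ref{CLT}. Two cosmetic points: your parenthetical that $BB\dual$ is singular ``under the weak H\"ormander condition'' is not needed and not always true (the model class includes the elliptic case $n=1$); and it would be worth stating explicitly the one-line justification of $\Gamma_t^\eps(\Gamma_1^\eps)^{-1}\Gamma_t^\eps\preceq\Gamma_t^\eps$, since it carries the whole argument.
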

In our discussion of examples in Section~\ref{sec:examples}, we observe that
the path $(\phi_t^\eps(y))_{t\in[0,1]}$ can exhibit a blow-up behaviour
in the limit $\eps\to 0$.
Hence, this path compensates for any blow-up
occurring in the process $(z_t^\eps(y))_{t\in[0,1]}$.
In comparison to the law of large number type
theorem~\cite[Theorem~1.1]{BMN} for sub-Riemannian diffusion bridges,
we note that in the weak H\"ormander setting the minimal-like path
$(\phi_t^\eps(y))_{t\in[0,1]}$ depends on $\eps>0$. However, as
in~\cite[Section~2]{BMN}, the path $(\phi_t^\eps(y))_{t\in[0,1]}$ can
still be obtained as projection of a solution to an appropriate
Hamiltonian system. Let us consider the Hamiltonian
$\ham^\eps\colon T\dual\R^d\to\R$ given by
\begin{equation*}
  \ham^\eps(q,p)=\eps p\dual A q+\frac{1}{2}p\dual B B\dual p\;.
\end{equation*}
The description
in~\cite[Section~2]{barilariandpaoli} implies that
$(\phi_t^\eps(y))_{t\in[0,1]}$ is the projection onto $\R^d$ of the unique
solution in $T\dual\R^d$ to the Hamiltonian equations associated with
$\ham^\eps$ subject to starting in $T_x\dual\R^d$ at
time $0$ and ending in $T_y\dual\R^d$ at time $1$.

Theorem~\ref{LLN} is a consequence of our study of the small-time
fluctuations for the bridge
$(z_t^\eps(y))_{t\in[0,1]}$. To state our fluctuation
result, cf.~Theorem~\ref{CLT}, we first introduce a basis for $\R^d$
which simplifies the analysis, also
see~\cite{barilariandpaoli} and \cite{evolution}.
For $k\in\{1,\dots,n\}$, set
\begin{equation}\label{ek}
  E_k=\operatorname{span}\left\{
    A^l Bv\colon v\in\R^m,\; 0\leq l\leq k-1\right\}\;,
\end{equation}
that is, $E_k$ is the subspace of $\R^d$ defined by the columns of the
matrices $A^lB$ for $l\in\{0,\dots,k-1\}$.
By condition~(\ref{kalman}) and the minimality of $n$, we
know both that $E_n=\R^d$ and that $E_{n-1}$ is a strict subset of
$\R^d$. Set $d_k=\dim E_k$. Since $\{E_k\}_{1\leq k\leq n}$ is an increasing
filtration of subspaces
of $\R^d$, we can and do choose an orthonormal basis
$\{e_1,\dots,e_d\}$ of $\R^d$ such that $\{e_1,\dots,e_{d_k}\}$ is
a basis of $E_k$.
For $r\in\R$, define
\begin{equation}\label{ureps}
  U^\eps(r)=\e^{\eps rA}B\;.
\end{equation}
As detailed in Lemma~\ref{urexpansion}, in the limit $\eps\to 0$ and
in our chosen basis, $U^\eps(r)$ takes the form
\begin{equation*}
  U^\eps(r)=
  \begin{pmatrix}
    u_1 \\ \eps ru_2 \\ \vdots \\ \eps^{n-1}r^{n-1}u_n
  \end{pmatrix}+
  \begin{pmatrix}
    O\left(\eps\right) \\ O\left(\eps^2\right) \\
    \vdots \\ O\left(\eps^n\right)
  \end{pmatrix}\;,
\end{equation*}
where $u_k$ is a $(d_k-d_{k-1}) \times m$ matrix
with constant entries. Here we use the convention that $d_0=0$.
Let
$D_\eps$ and $J_t$ be the $d\times d$ diagonal matrices whose
$j^{\rm th}$
diagonal element, for $d_{k-1}<j\leq d_k$, equals $\eps^{k-1}$ and
$t^{k-1/2}$, respectively.
The natural rescaled fluctuation process to study is
$(F_t^\eps)_{t\in[0,1]}$ given by
\begin{equation}\label{refluct}
  F_t^\eps=\eps^{-1/2}D_\eps^{-1}\left(z_t^\eps(y)-\phi_t^\eps(y)\right)\;,
\end{equation}
where we show that the fluctuations indeed neither depend on $x\in\R^d$
nor on $y\in\R^d$.
As in~\cite{MAP} and due to~(\ref{brackets}),
the orders of $\eps$ which we rescale the fluctuations by are
determined in terms of a filtration
induced by the commutator brackets of the
vector fields $X_0,X_1,\dots,X_m$. 
To describe the limit fluctuation process, we set, for $r\in\R$,
\begin{equation}\label{Uhat}
  \hat U(r)=
  \begin{pmatrix}
    u_1 \\ ru_2 \\ \vdots \\ r^{n-1}u_n
  \end{pmatrix}\;,
\end{equation}
and further introduce the $d\times d$
matrix $V$ which is an $n\times n$ block matrix whose
$(k,l)^{\rm th}$ block~element $V_{kl}$ is the
$(d_k-d_{k-1})\times (d_l-d_{l-1})$ matrix given by
\begin{equation}\label{Vdefn}
  V_{kl}=(-1)^{l+1}u_ku_l\dual\frac{(k-1)!\,(l-1)!}{(k+l-1)!}\;.
\end{equation}
As established in Lemma~\ref{invertV}, the matrix $V$ is
invertible. This allows us to describe the
small-time fluctuations for the bridge process
$(z_t^\eps(y))_{t\in[0,1]}$ as follows.
\begin{thm}\label{CLT}
  Let $(W_t)_{t\in[0,1]}$ be a standard Brownian motion in $\R^m$.
  In the chosen basis of $\R^d$, let $(F_t)_{t\in[0,1]}$ be the
  process defined by
  \begin{equation*}
    F_t=\int_0^t\hat U(t-s)\dd W_s-J_tVJ_tV^{-1}\int_0^1\hat U(1-s)\dd W_s\;.
  \end{equation*}
  Then, for all $x,y\in\R^d$, the rescaled fluctuations
  $(F_t^\eps)_{t\in[0,1]}$ converge weakly
  to $(F_t)_{t\in[0,1]}$ as $\eps\to 0$.
\end{thm}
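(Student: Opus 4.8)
The plan is to exploit that every $(z_t^\eps(y))_{t\in[0,1]}$ is Gaussian, so that Lemma~\ref{modelbridge} lets me write the fluctuations down explicitly. Representing the unconditioned diffusion as $x_t^\eps=\e^{\eps tA}x+\sqrt{\eps}\,Y_t^\eps$ with $Y_t^\eps=\int_0^t U^\eps(t-s)\dd W_s$, and setting $Q_{s,t}^\eps=\E[Y_s^\eps(Y_t^\eps)\dual]=\int_0^{s\wedge t}U^\eps(s-r)U^\eps(t-r)\dual\dd r$, the Gaussian bridge decomposition gives $z_t^\eps(y)-\phi_t^\eps(y)=\sqrt{\eps}\,(Y_t^\eps-Q_{t,1}^\eps(Q_{1,1}^\eps)^{-1}Y_1^\eps)$, where $Q_{1,1}^\eps=\e^{\eps A}\Gamma_1^\eps\e^{\eps A\dual}$ is invertible by~\cite[Proposition~A.1]{evolution}; in particular the fluctuations are centred Gaussian, depend neither on $x$ nor on $y$, and
\[
F_t^\eps=D_\eps^{-1}Y_t^\eps-\bigl(D_\eps^{-1}Q_{t,1}^\eps D_\eps^{-1}\bigr)\bigl(D_\eps^{-1}Q_{1,1}^\eps D_\eps^{-1}\bigr)^{-1}\bigl(D_\eps^{-1}Y_1^\eps\bigr)\;,
\]
all three factors being built from the single Brownian motion $W$. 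Since weak convergence in $C([0,1],\R^d)$ follows from convergence of the finite-dimensional distributions together with tightness, I would (i) pass to the limit in each factor above, (ii) identify the limit process with $(F_t)_{t\in[0,1]}$, and (iii) prove tightness.

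For (i), Lemma~\ref{urexpansion} gives $D_\eps^{-1}U^\eps(r)\to\hat U(r)$ as $\eps\to0$, uniformly for $r$ in compact sets. Hence the integrands converge in $L^2$, so $D_\eps^{-1}Y_t^\eps\to I_t:=\int_0^t\hat U(t-s)\dd W_s$ in $L^2$ --- jointly in $t$, as all integrals use the same $W$ --- and $D_\eps^{-1}Q_{s,t}^\eps D_\eps^{-1}\to R_{s,t}:=\int_0^{s\wedge t}\hat U(s-r)\hat U(t-r)\dual\dd r=\E[I_s I_t\dual]$. The limit $R_{1,1}$ is invertible: writing $\hat A$ for the nilpotent principal part of $A$ in the chosen graded basis (cf.~\cite{barilariandpaoli},~\cite{evolution}), Lemma~\ref{urexpansion} amounts to $\hat A^{k-1}B=(k-1)!\,u_k$ in the $k^{\rm th}$ block, so that $\hat U(r)=\sum_k \tfrac{r^{k-1}}{(k-1)!}\hat A^{k-1}B=\e^{r\hat A}B$; with $\hat\Gamma_1=\int_0^1\e^{-r\hat A}BB\dual\e^{-r\hat A\dual}\dd r$ one finds $R_{1,1}=\e^{\hat A}\hat\Gamma_1\e^{\hat A\dual}$ and, since the Beta integral $\int_0^1(1-r)^{k-1}(-r)^{l-1}\dd r$ produces exactly the blocks in~(\ref{Vdefn}), also $V=\int_0^1\hat U(1-r)\hat U(-r)\dual\dd r=\e^{\hat A}\hat\Gamma_1$, whence $R_{1,1}=V\e^{\hat A\dual}$ is invertible by Lemma~\ref{invertV}. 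Feeding these convergences into the displayed formula for $F_t^\eps$ and applying the continuous mapping theorem, the finite-dimensional distributions of $(F_t^\eps)$ converge to those of $\tilde F_t:=I_t-R_{t,1}R_{1,1}^{-1}I_1$.

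Step (ii) is the matrix identity $R_{t,1}R_{1,1}^{-1}=J_tVJ_tV^{-1}$, and here I would use the homogeneity of the principal part. One has $D_t\hat A D_t^{-1}=t\hat A$, hence $D_t\e^{s\hat A}D_t^{-1}=\e^{st\hat A}$, while $D_t^{-1}B=B$ because the columns of $B$ span the first block. With $\hat\Gamma_t=\int_0^t\e^{-r\hat A}BB\dual\e^{-r\hat A\dual}\dd r$, the substitution $r=tu$ together with these relations yields $\hat\Gamma_t=t\,D_t\hat\Gamma_1D_t$, and also $R_{t,1}=\e^{t\hat A}\hat\Gamma_t\e^{\hat A\dual}$. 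Since $J_t=\sqrt{t}\,D_t$, a short computation reduces both $R_{t,1}R_{1,1}^{-1}$ and $J_tVJ_tV^{-1}$ to $t\,\e^{t\hat A}D_t\hat\Gamma_1 D_t\hat\Gamma_1^{-1}\e^{-\hat A}$, which proves the identity; thus $\tilde F=F$, and $(F_t)$ is simply the bridge from $0$ to $0$ in unit time of the diffusion $\db I_t=\hat A I_t\,\db t+B\,\db W_t$, the Lanconelli--Polidoro principal part of~(\ref{model}).

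For the tightness in (iii), each $F^\eps$ is a centred Gaussian process, so by Gaussian hypercontractivity and the Kolmogorov criterion it suffices to bound $\E|F_t^\eps-F_s^\eps|^2\leq C\,|t-s|$ uniformly in $\eps\in(0,1]$. Writing $F_t^\eps=D_\eps^{-1}Y_t^\eps-M_t^\eps D_\eps^{-1}Y_1^\eps$ with $M_t^\eps=(D_\eps^{-1}Q_{t,1}^\eps D_\eps^{-1})(D_\eps^{-1}Q_{1,1}^\eps D_\eps^{-1})^{-1}$, the increment of the first term is controlled by the It\^o isometry once one knows that $r\mapsto D_\eps^{-1}U^\eps(r)$ is bounded and Lipschitz on $[0,1]$ uniformly in $\eps$, and the increment of the second term by the same information for $t\mapsto M_t^\eps$. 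I expect the main obstacle to be precisely this strengthening of Lemma~\ref{urexpansion}: it should follow from $\tfrac{\db}{\db r}D_\eps^{-1}U^\eps(r)=(\eps D_\eps^{-1}AD_\eps)(D_\eps^{-1}U^\eps(r))$ and from $\eps D_\eps^{-1}AD_\eps=\hat A+\ord(\eps)$, which is where the block-triangular form of $A$ enters, together with the fact that $D_\eps^{-1}Q_{1,1}^\eps D_\eps^{-1}$ stays uniformly non-degenerate on $(0,1]$ --- it is invertible for each such $\eps$ and converges to the invertible $R_{1,1}$. Granting these bounds one obtains $\E|D_\eps^{-1}Y_t^\eps-D_\eps^{-1}Y_s^\eps|^2\leq C|t-s|$ and $|M_t^\eps-M_s^\eps|\leq C|t-s|$, hence $\E|F_t^\eps-F_s^\eps|^2\leq C|t-s|$ uniformly in $\eps$, and Prokhorov's theorem completes the proof.
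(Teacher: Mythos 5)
Your proposal is correct, and its skeleton --- the explicit Gaussian representation $F_t^\eps=\int_0^tD_\eps^{-1}U^\eps(t-s)\dd W_s-D_\eps^{-1}\alpha_t^\eps D_\eps\int_0^1D_\eps^{-1}U^\eps(1-s)\dd W_s$ (your $M_t^\eps=(D_\eps^{-1}Q_{t,1}^\eps D_\eps^{-1})(D_\eps^{-1}Q_{1,1}^\eps D_\eps^{-1})^{-1}$ is exactly $D_\eps^{-1}\alpha_t^\eps D_\eps$), followed by passage to the limit in the kernel and in the bridge-correction matrix --- is the paper's. You diverge in two places. First, to identify the limit of the correction matrix the paper expands $\e^{\eps tA}\Gamma_t^\eps=D_\eps J_t\left(V+O(\eps)\right)J_tD_\eps$ directly, using Lemma~\ref{intidentity} to get $\int_0^t\hat U(t-s)\hat U(-s)\dual\dd s=J_tVJ_t$ and Lemma~\ref{invertV} to invert; you instead obtain the limit as $R_{t,1}R_{1,1}^{-1}$ and then verify $R_{t,1}R_{1,1}^{-1}=J_tVJ_tV^{-1}$ via the dilation homogeneity $D_t\hat AD_t^{-1}=t\hat A$ of the principal part. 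Your computation is right (indeed $R_{t,1}=J_tVJ_t\,\e^{\hat A\dual}$ and $R_{1,1}=V\e^{\hat A\dual}$, so the factor $\e^{\hat A\dual}$ simply cancels), and it has the merit of exhibiting the limit as the bridge from $0$ to $0$ of the principal-part diffusion $\db I_t=\hat AI_t\dd t+B\dd W_t$; the price is that you re-derive $\hat U(r)=\e^{r\hat A}B$, which the paper establishes inside the proof of Lemma~\ref{invertV}. Second, for upgrading finite-dimensional convergence to weak convergence on $C([0,1],\R^d)$ the paper invokes a ready-made criterion for mean-zero Gaussian processes (uniform convergence of the covariances), whereas you prove tightness by hand via Kolmogorov's criterion; the technical point you flag, the uniform-in-$\eps$ Lipschitz bound on $r\mapsto D_\eps^{-1}U^\eps(r)$ obtained from $\tfrac{\db}{\db r}D_\eps^{-1}U^\eps(r)=(\eps D_\eps^{-1}AD_\eps)(D_\eps^{-1}U^\eps(r))$ and $\eps D_\eps^{-1}AD_\eps=\hat A+O(\eps)$ (valid precisely because the blocks of $A$ with $k\geq l+2$ vanish), is correct and does close the gap, together with the uniform non-degeneracy of $D_\eps^{-1}Q_{1,1}^\eps D_\eps^{-1}$ for small $\eps$. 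Your route is more self-contained and elementary; the paper's is shorter because it outsources tightness to the Gaussian-process criterion.
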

It is of interest by itself
that after compensating for a blow-up
in
the process $(z_t^\eps(y))_{t\in[0,1]}$
through the path $(\phi_t^\eps(y))_{t\in[0,1]}$,
the small-time fluctuations do not
exhibit any further blow-ups as $\eps\to 0$.
Moreover, the example discussed in
Section~\ref{sec:OU} demonstrates that, while the
bridge processes and the
rescaled fluctuations
can always be computed explicitly due to the Gaussian nature of the
considered diffusion, Theorem~\ref{CLT} indeed simplifies the
determination of the small-time fluctuations for the bridge.

We observe that since $D_\eps,J_t, \hat U(r)$ and $V$ are uniquely
determined in terms of $n\in\N$ and $u_1,\dots,u_n$,
processes which give rise to the same
$n\in\N$ and $u_1,\dots,u_n$ for the same
orthonormal basis of $\R^d$
exhibit the same small-time fluctuations for the bridge, according to
Theorem~\ref{CLT}.
A formulation of this property in terms of the generator
$\lo$ is given in Remark~\ref{nilpotent}.
It is similar to~\cite{MAP} where, in a suitable
chart, the small-time
fluctuations for sub-Riemannian diffusion loops only depend on the nilpotent
approximations of the vector fields $X_1,\dots,X_m$.

The paper is organised as follows.
In Section~\ref{sec:modelbridge}, we discuss in more detail the hypoelliptic
diffusions in our model class, 
and we derive an
expression for the associated bridge processes. The small-time
analysis, which leads to the proofs of Theorem~\ref{LLN} and
Theorem~\ref{CLT}, is then performed in
Section~\ref{sec:smalltime}. We close by presenting a collection of
examples in Section~\ref{sec:examples} to illustrate our results.
As part of the discussions in Section~\ref{ultra}, we find an
explicit expression for the bridge from $0$ to $0$ in time $1$ of an
iterated Kolmogorov diffusion.

\section{Diffusion bridge in the model class}
\label{sec:modelbridge}
We analyse the diffusion processes whose generators are of the
form~(\ref{model}) for matrices $A$ and $B$ satisfying 
condition~(\ref{kalman}). We further derive explicit expressions for
the associated bridge processes.
Let $(W_t)_{t\in[0,1]}$ be a standard
Brownian motion in $\R^m$, which we assume is realised as the
coordinate process on the path space
$\{w\in C([0,1],\R^m)\colon w_0=0\}$ under Wiener measure $\pr$.
Fix $x\in\R^d$. For $\eps>0$, let $(x_t^\eps)_{t\in[0,1]}$
be the unique strong solution to the It\^o stochastic differential
equation in $\R^d$
\begin{equation*}
  \db x_t^\eps = \eps A x_t^\eps\dd t+\sqrt{\eps}B\dd W_t\;,
  \qquad x_0^\eps=x\;.
\end{equation*}
We note that the process $(x_t^\eps)_{t\in[0,1]}$ has
generator $\eps\lo$, where
$\lo$ is given by~(\ref{model}). From the
discussions in the
Introduction, we know that operators of this form
satisfy a weak H\"ormander condition and hence, that
$(x_t^\eps)_{t\in[0,1]}$ is a hypoelliptic diffusion.
It has the explicit expression
\begin{equation}\label{expexp}
  x_t^\eps=\e^{\eps tA}x+\e^{\eps tA}\int_0^t\e^{-\eps sA}\sqrt{\eps}B\dd W_s\;,
\end{equation}
as can be checked by direct computation. We see
that $(x_t^\eps)_{t\in[0,1]}$ is a Gaussian process with
\begin{equation}\label{modelmean}
  \E\left[x_t^\eps\right]=\e^{\eps tA}x
\end{equation}
and whose covariance structure is given as follows in terms of
$\Gamma_t^\eps$ defined by~(\ref{Gamma}).
\begin{lemma}\label{modelcov}
  For $t_1,t_2\in[0,1]$ with $t_1\leq t_2$, we have
  \begin{equation*}
    \operatorname{cov}\left(x_{t_1}^\eps,x_{t_2}^\eps\right)
    =\eps\e^{\eps t_1 A}\Gamma_{t_1}^\eps\e^{\eps t_2 A\dual}\;.
  \end{equation*}
\end{lemma}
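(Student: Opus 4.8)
The plan is to compute the covariance directly from the explicit representation~(\ref{expexp}). Writing $M_t = \int_0^t \e^{-\eps sA}\sqrt{\eps}B\dd W_s$, we have $x_t^\eps - \E[x_t^\eps] = \e^{\eps tA}M_t$, so that
\[
  \operatorname{cov}\left(x_{t_1}^\eps,x_{t_2}^\eps\right)
  = \e^{\eps t_1 A}\,\E\!\left[M_{t_1}M_{t_2}\dual\right]\e^{\eps t_2 A\dual}\;.
\]
Hence everything reduces to evaluating $\E[M_{t_1}M_{t_2}\dual]$ for $t_1 \leq t_2$.

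The key step is the It\^o isometry for vector-valued stochastic integrals. Since $(W_s)$ is a standard Brownian motion in $\R^m$, for deterministic matrix-valued integrands $\Phi,\Psi$ one has $\E\big[(\int_0^{t_1}\Phi_s\dd W_s)(\int_0^{t_2}\Psi_s\dd W_s)\dual\big] = \int_0^{t_1}\Phi_s\Psi_s\dual\dd s$ when $t_1\le t_2$, because the increments of $W$ beyond time $t_1$ are independent of $\mathcal F_{t_1}$ and $\E[\dd W_s\dd W_s\dual] = I_m\dd s$. Applying this with $\Phi_s = \Psi_s = \e^{-\eps sA}\sqrt{\eps}B$ gives
\[
  \E\!\left[M_{t_1}M_{t_2}\dual\right]
  = \eps\int_0^{t_1}\e^{-\eps sA}BB\dual\e^{-\eps sA\dual}\dd s
  = \eps\,\Gamma_{t_1}^\eps\;,
\]
by the definition~(\ref{Gamma}) of $\Gamma_t^\eps$. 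Substituting back yields $\operatorname{cov}(x_{t_1}^\eps,x_{t_2}^\eps) = \eps\,\e^{\eps t_1 A}\Gamma_{t_1}^\eps\e^{\eps t_2 A\dual}$, as claimed.

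There is no real obstacle here; the only point requiring a little care is the justification of the vector-valued It\^o isometry and the use of independence of future increments when $t_1 < t_2$ (for $t_1 = t_2$ it is the standard isometry). One can either invoke the component-wise It\^o isometry together with the fact that distinct components of $W$ are independent, or simply quote the multidimensional It\^o isometry. I would present the one-line computation above and relegate this justification to a short remark.
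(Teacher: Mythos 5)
Your proposal is correct and follows essentially the same route as the paper: both use the explicit representation~(\ref{expexp}) together with~(\ref{modelmean}) to reduce the covariance to $\E[M_{t_1}M_{t_2}\dual]$, and then apply the (multidimensional) It\^o isometry to obtain $\eps\,\Gamma_{t_1}^\eps$. Your additional remark about independence of increments beyond time $t_1$ is a reasonable elaboration of the step the paper takes implicitly.
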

\begin{proof}
  Using the expression~(\ref{expexp}),
  the property~(\ref{modelmean}) and the It\^o isometry,
  we obtain
  \begin{align*}
    \operatorname{cov}\left(x_{t_1}^\eps,x_{t_2}^\eps\right)
    &=\E\left[\left(x_{t_1}^\eps-\E\left[x_{t_1}^\eps\right]\right)
      \left(x_{t_2}^\eps-\E\left[x_{t_2}^\eps\right]\right)\dual\right]\\
    &=\E\left[\e^{\eps t_1 A}\int_0^{t_1}\e^{-\eps sA}\sqrt{\eps}B\dd W_s
      \left(\int_0^{t_2}\e^{-\eps sA}\sqrt{\eps}B\dd W_s\right)\dual
      \e^{\eps t_2 A\dual}\right]\\
    &=\eps\e^{\eps t_1 A}
      \left(\int_0^{t_1}\e^{-\eps sA}BB\dual\e^{-\eps sA\dual}\dd s\right)
      \e^{\eps t_2 A\dual}\\
    &=\eps\e^{\eps t_1 A}\Gamma_{t_1}^\eps\e^{\eps t_2 A\dual}\;,
  \end{align*}
  as claimed.
\end{proof}
With the covariance structure for the Gaussian process
$(x_t^\eps)_{t\in[0,1]}$ at hand,
we can find an explicit expression
for the corresponding bridge processes. The derivation relies on the fact
that Gaussian random variables are independent if and only if they are
uncorrelated.
\begin{lemma}\label{modelbridge}
  For $t\in[0,1]$, set
  \begin{equation}\label{alpha}
    \alpha_t^\eps=\e^{\eps tA}\Gamma_t^\eps
    \left(\Gamma_1^\eps\right)^{-1}\e^{-\eps A}\;.
  \end{equation}
  Then, for $y\in\R^d$, the stochastic process
  $(z_t^\eps(y))_{t\in[0,1]}$ in $\R^d$ given by
  \begin{equation*}
    z_t^\eps(y)=x_t^\eps-\alpha_t^\eps\left(x_1^\eps-y\right)
  \end{equation*}
  has the same law as the process
  $(x_t^\eps)_{t\in[0,1]}$ conditioned on $x_1^\eps=y$.
\end{lemma}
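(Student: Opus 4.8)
The plan is to exploit the fact that everything in sight is jointly Gaussian, so that the conditional law of $(x_t^\eps)_{t\in[0,1]}$ given $x_1^\eps=y$ is itself Gaussian and is therefore completely determined by its mean function and its covariance function. The strategy is thus: (i) compute the conditional mean and covariance of $(x_t^\eps)$ given $x_1^\eps=y$ using the standard Gaussian conditioning formula (or, equivalently, the orthogonal-decomposition trick mentioned in the text); (ii) compute the mean and covariance of the explicitly defined process $(z_t^\eps(y))$; (iii) check the two agree. Since both processes are Gaussian, matching first and second moments proves equality in law.

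For step (ii), it is cleanest to use the decomposition $x_t^\eps = z_t^\eps(y) + \alpha_t^\eps(x_1^\eps - y)$ and to verify that $z_t^\eps(y)$ and $x_1^\eps$ are independent, which by the remark preceding the lemma reduces to checking $\operatorname{cov}(z_t^\eps(y), x_1^\eps)=0$. Using $z_t^\eps(y) = x_t^\eps - \alpha_t^\eps(x_1^\eps-y)$ and bilinearity, this covariance equals $\operatorname{cov}(x_t^\eps,x_1^\eps) - \alpha_t^\eps\operatorname{cov}(x_1^\eps,x_1^\eps)$. By Lemma~\ref{modelcov}, for $t\leq 1$ this is
\[
  \eps\,\e^{\eps tA}\Gamma_t^\eps\e^{\eps A\dual}
  -\alpha_t^\eps\,\eps\,\e^{\eps A}\Gamma_1^\eps\e^{\eps A\dual}\;,
\]
and substituting the definition~(\ref{alpha}) of $\alpha_t^\eps$ makes the two terms cancel. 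Granted this independence, conditioning on $x_1^\eps=y$ leaves $z_t^\eps(y)$ unchanged while forcing the term $\alpha_t^\eps(x_1^\eps-y)$ to vanish, so the conditioned process has the law of $(z_t^\eps(y))_{t\in[0,1]}$; one then reads off that its mean is $\E[x_t^\eps]-\alpha_t^\eps(\E[x_1^\eps]-y)$ and its covariance is $\operatorname{cov}(x_{t_1}^\eps,x_{t_2}^\eps)-\alpha_{t_1}^\eps\operatorname{cov}(x_1^\eps,x_{t_2}^\eps)$, which one can alternatively verify matches the Gaussian conditioning formula directly.

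A couple of points require care. First, one should note that the joint law of $(x_{t_1}^\eps,\dots,x_{t_k}^\eps,x_1^\eps)$ for any finite collection of times is Gaussian, so that the conditional law is Gaussian finite-dimensional-distribution-wise and the matching of moments indeed pins it down; the continuity of sample paths then identifies the processes. Second, the conditioning is with respect to a single random vector $x_1^\eps$ whose covariance $\operatorname{cov}(x_1^\eps,x_1^\eps)=\eps\,\e^{\eps A}\Gamma_1^\eps\e^{\eps A\dual}$ is invertible since $\Gamma_1^\eps$ is invertible by the cited result of~\cite{evolution}, so the regular conditional distribution is well defined and the Gaussian conditioning formula applies without degeneracy issues. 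I expect the main (though still routine) obstacle to be bookkeeping with the matrix exponentials and the placement of $\eps$ factors when verifying the cancellation; conceptually the argument is just "jointly Gaussian, match two moments."
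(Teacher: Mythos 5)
Your proposal is correct and follows essentially the same route as the paper: decompose $x_t^\eps$ into $z_t^\eps$ plus a multiple of $x_1^\eps$, check via Lemma~\ref{modelcov} that the covariance of the two pieces cancels by the definition~(\ref{alpha}) of $\alpha_t^\eps$, and invoke Gaussian independence to read off the conditioned law. The extra remarks on finite-dimensional distributions and the invertibility of $\Gamma_1^\eps$ are sound but do not change the argument.
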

\begin{proof}
  For all $t\in[0,1]$, we can write
  \begin{equation}\label{split}
    x_t^\eps=z_t^\eps(0)+\alpha_t^\eps x_1^\eps\;.
  \end{equation}
  Applying Lemma~\ref{modelcov}, we compute that
  \begin{align*}
    \operatorname{cov}\left(z_t^\eps(0),x_1^\eps\right)
    &=\operatorname{cov}\left(x_t^\eps-\alpha_t^\eps x_1^\eps,x_1^\eps\right)
    =\operatorname{cov}\left(x_t^\eps,x_1^\eps\right)-
     \alpha_t^\eps\operatorname{cov}\left(x_1^\eps,x_1^\eps\right)\\
    &=\eps\e^{\eps t A}\Gamma_{t}^\eps\e^{\eps A\dual}{}-
      \eps\alpha_t^\eps\e^{\eps A}\Gamma_{1}^\eps\e^{\eps A\dual}=0\;.
  \end{align*}
  Since $(z_t^\eps(0))_{t\in[0,1]}$ and
  $x_1^\eps$ are both Gaussian,
  their vanishing covariance implies that $(z_t^\eps(0))_{t\in[0,1]}$
  and $x_1^\eps$ are independent. Thus, from the
  representation~(\ref{split}) it follows that the bridge obtained by
  conditioning the process $(x_t^\eps)_{t\in[0,1]}$ on $x_1^\eps=y$
  can be expressed, at time $t\in[0,1]$, as
  \begin{equation*}
    z_t^\eps(0)+\alpha_t^\eps y\;,
  \end{equation*}
  which equals $z_t^\eps(y)$.
\end{proof}
We observe that the path $(\phi_t^\eps(y))_{t\in[0,1]}$ in $\R^d$ defined
by~(\ref{phi}) rewrites as
\begin{equation*}
  \phi_t^\eps(y)=\e^{\eps tA}x+\alpha_t^\eps\left(y-\e^{\eps A}x\right)\;.
\end{equation*}
Hence, the expression~(\ref{expexp}) and
Lemma~\ref{modelbridge} imply
\begin{equation}\label{nonscaledfluct}
  z_t^\eps(y)-\phi_t^\eps(y)=
  \e^{\eps tA}\int_0^t\e^{-\eps sA}\sqrt{\eps}B\dd W_s-
  \alpha_t^\eps\left(\e^{\eps A}\int_0^1\e^{-\eps sA}\sqrt{\eps}B\dd W_s\right)\;.
\end{equation}
The analysis of this expression in the limit $\eps\to 0$ is performed
in the next section.

\section{Small-time analysis for the model
  diffusion bridge}
\label{sec:smalltime}
We study the dependence of $\e^{\eps rA}B$ and $\alpha_t^\eps$ given
by~(\ref{alpha}) on $\eps\to 0$ and then use
the expression~(\ref{nonscaledfluct}) to give
the proofs of Theorem~\ref{LLN} and Theorem~\ref{CLT}.
Recall from~(\ref{ureps}) that, for $r\in\R$, we define
\begin{equation*}
  U^\eps(r)=\e^{\eps rA}B\;.
\end{equation*}
In a suitable basis, $U^\eps(r)$ takes the following form.
\begin{lemma}\label{urexpansion}
  Let $\{e_1,\dots,e_d\}$ be an orthonormal basis of $\R^d$ such that
  $\{e_1,\dots,e_{d_k}\}$
  is a basis of the subspace $E_k$ given by~{\rm (\ref{ek})},
  for $k\in\{1,\dots,n\}$. In such a basis, $U^\eps(r)$
  has the form, as $\eps\to 0$,
  \begin{equation}\label{urepsform}
    U^\eps(r)=
    \begin{pmatrix}
      u_1 \\ \eps ru_2 \\ \vdots \\ \eps^{n-1}r^{n-1}u_n
    \end{pmatrix}+
    \begin{pmatrix}
      O\left(\eps\right) \\ O\left(\eps^2\right) \\
      \vdots \\ O\left(\eps^n\right)
    \end{pmatrix}\;,
  \end{equation}
  uniformly in $r$ on compact intervals,
  where $u_k$ is a $(d_k-d_{k-1}) \times m$ matrix with constant
  entries.
\end{lemma}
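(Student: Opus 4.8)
The plan is to expand the matrix exponential $\e^{\eps rA}$ as a power series in $\eps$ and to track, block by block, how the columns of $A^lB$ distribute over the filtration $\{E_k\}$. First I would write
\begin{equation*}
  U^\eps(r)=\e^{\eps rA}B=\sum_{l=0}^{\infty}\frac{(\eps r)^l}{l!}A^lB\;,
\end{equation*}
the series converging absolutely and uniformly for $(\eps,r)$ in any bounded set, which already yields the claimed uniformity in $r$ on compact intervals once the block structure is understood. The key structural input is the defining property of the chosen basis: since $\{e_1,\dots,e_{d_k}\}$ spans $E_k=\operatorname{span}\{A^lBv\colon v\in\R^m,\ 0\le l\le k-1\}$, the columns of $A^lB$ lie in $E_{l+1}$, hence in the chosen coordinates the matrix $A^lB$ has all its rows with index $>d_{l+1}$ equal to zero. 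Writing $U^\eps(r)$ as an $n$-block column according to the splitting $\R^d=\bigoplus_{k=1}^n\big(E_k\ominus E_{k-1}\big)$, the $k^{\rm th}$ block (rows $d_{k-1}<j\le d_k$) therefore only receives contributions from the terms $A^lB$ with $l\ge k-1$, i.e. from $\eps^l$ with $l\ge k-1$. This immediately gives that the $k^{\rm th}$ block is $O(\eps^{k-1})$, and its leading $\eps^{k-1}$-coefficient is $r^{k-1}/(k-1)!$ times the $k^{\rm th}$ block of $A^{k-1}B$; absorbing the factorial into the constant matrix, I would \emph{define} $u_k$ to be $1/(k-1)!$ times the submatrix of $A^{k-1}B$ formed by rows $d_{k-1}<j\le d_k$, which is a $(d_k-d_{k-1})\times m$ constant matrix as required. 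All remaining terms in the $k^{\rm th}$ block carry $\eps^l$ with $l\ge k$, so they are collected into the $O(\eps^k)$ remainder, with the uniform-in-$r$ bound on compacts coming from the uniform convergence of the exponential series.

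The one point that needs a genuine (if short) argument, rather than bookkeeping, is that the leading coefficient $u_k$ is actually the right object and in particular that the statement is consistent with $E_{k-1}\subsetneq E_k$ for $k\le n$: I should check that the construction of the basis is compatible with, but does not force, $u_k$ to have full row rank — the lemma as stated only asserts $u_k$ is a constant matrix of the stated size, so no rank claim is needed here (it will be used later, e.g. in Lemma~\ref{invertV}). I expect the main obstacle, such as it is, to be purely notational: setting up the block decomposition cleanly and verifying that "columns of $A^lB$ lie in $E_{l+1}$'' transfers correctly to "rows beyond $d_{l+1}$ vanish'' in the orthonormal basis, i.e. that passing to the basis $\{e_1,\dots,e_d\}$ via the orthogonal change of coordinates indeed puts $A^lB$ in the asserted echelon form. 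Once that is in place, matching powers of $\eps$ in the series term by term completes the proof, and I would close by noting the remainder estimate is uniform in $r$ over $[-R,R]$ for each fixed $R>0$ because $\sum_{l\ge k}\frac{R^l\eps^{l}}{l!}\|A^lB\|=O(\eps^k)$.
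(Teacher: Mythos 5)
Your proposal is correct and follows essentially the same route as the paper: both rest on the observation that in the chosen orthonormal basis the rows of $A^lB$ with index exceeding $d_{l+1}$ vanish, and both then read off the leading term $\frac{\eps^{k-1}r^{k-1}}{(k-1)!}$ times the relevant rows of $A^{k-1}B$ from the exponential series, with the tail absorbed into the $O(\eps^k)$ remainder uniformly in $r$ on compacts. The only cosmetic difference is that the paper truncates $\e^{\eps rA}$ with a Taylor remainder rather than summing the full series, and your identification of $u_k$ (including the factor $1/(k-1)!$) matches what is implicit in the paper's computation.
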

\begin{proof}
  Write $\langle\cdot,\cdot\rangle$ for the
  standard inner product on $\R^d$.
  Since
  $E_k$ is the subspace of $\R^d$ spanned by the columns of
  $A^lB$ for $l\in\{0,\dots,k-1\}$, these columns
  can be written as a linear
  combination of the vectors $e_1,\dots,e_{d_k}$.
  It follows that, for 
  $j\in\{d_{k}+1,\dots,d\}$ and for all $v\in\R^m$,
  \begin{equation}\label{cancelAlB}
    \left\langle e_j,A^lBv\right\rangle=0
    \quad\mbox{for }l\in\{0,\dots,k-1\}\;.
  \end{equation}
  Due to the properties of the matrix exponential, we have, as $\eps\to 0$,
  \begin{equation*}
    \e^{\eps r A}=\sum_{l=0}^{k-1}\frac{1}{l!}\left(\eps rA\right)^l+
    \frac{1}{k!}\left(\eps rA\right)^k+O\left(\eps^{k+1}\right)\;,
  \end{equation*}
  uniformly in $r\in\R$ on compact intervals.
  By using~(\ref{cancelAlB}) we obtain that,
  for all $j\in\{d_{k}+1,\dots,d\}$ and all $v\in\R^m$,
  \begin{equation}\label{origexp}
    \begin{aligned}
      \left\langle e_j,U^\eps(r)v\right\rangle=
      \left\langle e_j,\e^{\eps r A}Bv\right\rangle
      &=
      \sum_{l=0}^{k-1}\frac{\eps^l r^l}{l!}
      \left\langle e_j,A^lBv\right\rangle+
      \frac{\eps^kr^k}{k!}
      \left\langle e_j,A^kBv\right\rangle+O\left(\eps^{k+1}\right)\\
      &=\frac{\eps^kr^k}{k!}
      \left\langle e_j,A^kBv\right\rangle+O\left(\eps^{k+1}\right)\;,
    \end{aligned}
  \end{equation}
  uniformly in $r$ on compact intervals.
  This establishes that $U^\eps(r)$ is indeed of the
  form~(\ref{urepsform}).
\end{proof}
We work in such an orthonormal basis of $\R^d$ which respects the
filtration of subspaces $\{E_k\}_{1\leq k\leq n}$ for the remainder of
the section. According to Lemma~\ref{urexpansion},
for the rescaling matrix $D_\eps$ and for
$\hat U(r)$ defined by~(\ref{Uhat}), we have
\begin{equation}\label{exp1}
  U^\eps(r)=D_\eps\left(\hat U(r)+O\left(\eps\right)\right)\;,
\end{equation}
uniformly in $r\in\R$ on compact intervals. We deduce that,
uniformly in $t\in[0,1]$,
\begin{align}
  \e^{\eps tA}\Gamma_t^\eps=\label{conciseGamma}
  \e^{\eps tA}\int_0^t\e^{-\eps sA}BB\dual\e^{-\eps sA\dual}\dd s
  &=\int_0^t U^\eps(t-s)U^\eps(-s)\dual\dd s\\
  &=D_\eps\left(\int_0^t \hat U(t-s)\hat U(-s)\dual\dd s+
    O\left(\eps\right)\right)D_\eps\;.\nonumber
\end{align}
We use the following lemma to obtain a concise expression
of $\int_0^t \hat U(t-s)\hat U(-s)\dual\dd s$, for $t\in[0,1]$,
in terms of $u_1,\dots,u_n$.
\begin{lemma}\label{intidentity}
  For $k,l\in\N$ and for all $t\in[0,1]$, we have
  \begin{equation*}
    \int_0^t(t-s)^{k-1}(-s)^{l-1}\dd s=
    (-1)^{l-1}\frac{(k-1)!\,(l-1)!}{(k+l-1)!}t^{k+l-1}\;.
  \end{equation*}
\end{lemma}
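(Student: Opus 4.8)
The statement is a clean Beta-function identity, so my plan is to reduce it to the standard evaluation
\[
  \int_0^1 a^{k-1}(1-a)^{l-1}\dd a = B(k,l) = \frac{(k-1)!\,(l-1)!}{(k+l-1)!}
\]
for $k,l\in\N$, which follows by the usual induction on $l$ via integration by parts (or may simply be quoted). First I would handle the trivial case $t=0$, where both sides vanish since $k+l-1\geq 1$. For $t\in(0,1]$, substitute $s=ta$ with $\dd s=t\dd a$, so that $t-s=t(1-a)$ and $-s=-ta$; the integrand becomes $t^{k-1}(1-a)^{k-1}(-1)^{l-1}t^{l-1}a^{l-1}\cdot t\dd a$, and pulling out the powers of $t$ and the sign gives
\[
  \int_0^t(t-s)^{k-1}(-s)^{l-1}\dd s
  = (-1)^{l-1}t^{k+l-1}\int_0^1 a^{l-1}(1-a)^{k-1}\dd a.
\]
The remaining integral is $B(l,k)=B(k,l)=\frac{(k-1)!\,(l-1)!}{(k+l-1)!}$, which yields the claimed formula.

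The only piece requiring an argument rather than a citation is the evaluation of $B(k,l)$; I would include the short induction on $l$: the base case $l=1$ is $\int_0^1 a^{k-1}\dd a = 1/k = \frac{(k-1)!\,0!}{k!}$, and the inductive step integrates by parts, differentiating $(1-a)^{l-1}$ and integrating $a^{k-1}$, to get $\int_0^1 a^{k-1}(1-a)^{l-1}\dd a = \frac{l-1}{k}\int_0^1 a^{k}(1-a)^{l-2}\dd a$, after which the induction hypothesis (with $k\mapsto k+1$, $l\mapsto l-1$) closes the computation. There is no real obstacle here: the substitution is routine and the Beta identity is classical, so the main care is just bookkeeping the sign $(-1)^{l-1}$ and the exponent $t^{k+l-1}$, both of which drop out cleanly from the scaling $s=ta$.
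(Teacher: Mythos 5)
Your proof is correct, but it takes a different route from the paper's. You rescale via $s=ta$ to peel off the factor $(-1)^{l-1}t^{k+l-1}$ and reduce the claim to the classical Beta-function evaluation $\int_0^1 a^{l-1}(1-a)^{k-1}\dd a=\frac{(k-1)!\,(l-1)!}{(k+l-1)!}$, which you then establish by induction on $l$ through integration by parts; all the steps check out, including the vanishing of the boundary term in the integration by parts (valid since $k\geq1$ and $l\geq2$ in the inductive step) and the use of the induction hypothesis with $k$ shifted to $k+1$, which is legitimate because the hypothesis is quantified over all $k$. The paper instead runs an induction on $k$ with $l$ fixed, working directly with the $t$-dependent integral: it differentiates both sides in $t$, observes that $\frac{\db}{\db t}\int_0^t(t-s)^{k-1}(-s)^{l-1}\dd s=(k-1)\int_0^t(t-s)^{k-2}(-s)^{l-1}\dd s$, matches this with the derivative of the right-hand side via the induction hypothesis, and integrates back using the common value $0$ at $t=0$. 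The paper's argument avoids any substitution or sign bookkeeping and stays entirely within the form of the stated identity; yours has the advantage of isolating the $t$- and sign-dependence in one clean scaling step and identifying the core of the lemma as a standard, quotable fact. Either proof is acceptable here.
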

\begin{proof}
  We prove this identity by induction over $k\in\N$ with
  $l\in\N$ fixed. For $k=1$, we compute
  \begin{equation*}
    \int_0^t(-s)^{l-1}\dd s=\frac{(-1)^{l-1}t^l}{l}=
    \frac{(-1)^{l-1}(l-1)!}{l!}t^l\;,
  \end{equation*}
  which settles the base case for all $t\in[0,1]$.
  To establish the induction step, consider
  the functions 
  $f_{k},g_{k}\colon[0,1]\to\R$ defined by
  \begin{equation*}
    f_{k}(t)=\int_0^t(t-s)^{k-1}(-s)^{l-1}\dd s
    \quad\mbox{and}\quad
    g_{k}(t)=(-1)^{l-1}\frac{(k-1)!\,(l-1)!}{(k+l-1)!}t^{k+l-1}\;.
  \end{equation*}
  We have
  \begin{equation*}
    \frac{\db}{\db t}f_{k}(t)=
    (k-1)\int_0^t(t-s)^{k-2}(-s)^{l-1}\dd s
  \end{equation*}
  as well as
  \begin{equation*}
    \frac{\db}{\db t}g_{k}(t)=
    (-1)^{l-1}\frac{(k-1)!\,(l-1)!}{(k+l-2)!}t^{k+l-2}\;.
  \end{equation*}
  The induction hypothesis implies that
  \begin{equation}\label{helpeq}
    \frac{\db}{\db t}f_{k}(t)=(k-1)f_{k-1}=(k-1)g_{k-1}=
    \frac{\db}{\db t}g_{k}(t)
  \end{equation}
  for all $t\in[0,1]$. Due to $f_{k}(0)=0=g_{k}(0)$,
  the result follows upon integrating~(\ref{helpeq}).
\end{proof}
For $t\in[0,1]$, the matrix
$\int_0^t\hat U(t-s)\hat U(-s)\dual\dd s$ is an $n\times n$ block matrix
whose $(k,l)^{\rm th}$ block element
is the $(d_k-d_{k-1})\times (d_l-d_{l-1})$ matrix
\begin{equation*}
  u_ku_l\dual\int_0^t(t-s)^{k-1}(-s)^{l-1}\dd s\;.
\end{equation*}
Using Lemma~\ref{intidentity} we deduce
that, with the $n\times n$ block matrix $V$
defined by~(\ref{Vdefn}) and the rescaling matrix $J_t$,
\begin{equation}\label{uhatjvj}
  \int_0^t\hat U(t-s)\hat U(-s)\dual\dd s=J_tVJ_t\;.
\end{equation}
Following on from~(\ref{conciseGamma}),
we end up with the expression
\begin{equation}\label{expGammaexp}
  \e^{\eps tA}\Gamma_t^\eps=
  D_\eps\left(\int_0^t \hat U(t-s)\hat U(-s)\dual\dd s+
    O\left(\eps\right)\right)D_\eps=
  D_\eps J_t\left(V+O\left(\eps\right)\right)J_tD_\eps\;,
\end{equation}
uniformly in $t\in[0,1]$.
To use~(\ref{expGammaexp}) to obtain an alternative expression
for $\alpha_t^\eps$, we first show that the square matrix $V$ is
invertible.
\begin{lemma}\label{invertV}
  The $n\times n$ block matrix $V$ whose $(k,l)^{\rm th}$ block
  element is given by~{\rm (\ref{Vdefn})} is invertible.
\end{lemma}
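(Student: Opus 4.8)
The plan is to show that $V$ is invertible by exhibiting it as (a constant multiple of) a Gram-type matrix associated with a linearly independent collection of matrix-valued functions on $[0,1]$. Concretely, recall from~(\ref{uhatjvj}) that for every $t\in[0,1]$ we have
\begin{equation*}
  J_tVJ_t=\int_0^t\hat U(t-s)\hat U(-s)\dual\dd s\;,
\end{equation*}
and the integrand is built block-wise out of the $u_k$ together with the monomials $(t-s)^{k-1}$ and $(-s)^{l-1}$. Since $J_1$ is the identity matrix, evaluating at $t=1$ gives the cleanest handle: $V=\int_0^1\hat U(1-s)\hat U(-s)\dual\dd s$. I would therefore first reduce invertibility of $V$ to showing that $V$ is nonsingular, i.e. that $V\xi=0$ implies $\xi=0$ for $\xi\in\R^d$, and then unpack $V\xi$ in terms of the block structure.

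The key step is to relate the quadratic form defined by $V$ to an honest inner product. Using Lemma~\ref{intidentity} to run the computation backwards, for $\xi\in\R^d$ decomposed into blocks $\xi=(\xi_1,\dots,\xi_n)$ with $\xi_k\in\R^{d_k-d_{k-1}}$, one has
\begin{equation*}
  \xi\dual V\xi=\sum_{k,l=1}^n(-1)^{l+1}\xi_k\dual u_ku_l\dual\xi_l\,
  \frac{(k-1)!\,(l-1)!}{(k+l-1)!}
  =\sum_{k,l=1}^n\xi_k\dual u_ku_l\dual\xi_l\int_0^1(1-s)^{k-1}(-s)^{l-1}\dd s\;.
\end{equation*}
The right-hand side is exactly $\int_0^1\big(\sum_k(1-s)^{k-1}u_k\dual\xi_k\big)\dual\big(\sum_l(-s)^{l-1}u_l\dual\xi_l\big)\dd s$ up to checking the bookkeeping of signs, which after the substitution $s\mapsto -s$ (or equivalently absorbing $(-1)^{l-1}$ into the $l$-th summand) becomes a genuine $L^2([0,1],\R^m)$ norm squared of the function $s\mapsto\sum_{k=1}^n(1-s)^{k-1}u_k\dual\xi_k$. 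Hence $\xi\dual V\xi=0$ forces $\sum_{k=1}^n(1-s)^{k-1}u_k\dual\xi_k=0$ for almost every $s$, and by continuity for all $s\in[0,1]$; since $\{1,(1-s),\dots,(1-s)^{n-1}\}$ are linearly independent polynomials, this yields $u_k\dual\xi_k=0$ for each $k$.

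It then remains to conclude $\xi_k=0$ from $u_k\dual\xi_k=0$, i.e. to verify that each $u_k$ has full row rank $d_k-d_{k-1}$. This is where the choice of basis and the Kalman condition enter: by~(\ref{origexp}) in the proof of Lemma~\ref{urexpansion}, the block $u_k$ is, up to the nonzero scalar $1/k!$ and the choice of the $e_j$, the matrix of the linear map $v\mapsto A^{k-1}Bv$ read off in the coordinates $e_{d_{k-1}+1},\dots,e_{d_k}$; since $E_k=E_{k-1}+\operatorname{span}\{A^{k-1}Bv:v\in\R^m\}$ and $\{e_1,\dots,e_{d_k}\}$ extends a basis of $E_{k-1}$, the images $A^{k-1}Bv$ span the complement directions, so $u_k$ is surjective onto $\R^{d_k-d_{k-1}}$ and thus $u_k\dual$ is injective. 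Therefore $u_k\dual\xi_k=0$ gives $\xi_k=0$ for all $k$, so $\xi=0$, and $V$ is invertible. The main obstacle I anticipate is purely one of bookkeeping: keeping the alternating signs $(-1)^{l+1}$ versus $(-1)^{l-1}$ straight so that the quadratic form is correctly identified as a positive semidefinite $L^2$ inner product rather than something indefinite; once that identification is made cleanly the rest is essentially immediate from Lemma~\ref{urexpansion} and the minimality of $n$.
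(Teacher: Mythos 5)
There is a genuine gap, and it is exactly at the point you flag as ``purely bookkeeping'': the identification of $\xi\dual V\xi$ with an $L^2$ norm squared is false, and no arrangement of signs will make it true. The integrand in $\int_0^1\hat U(1-s)\hat U(-s)\dual\dd s$ is $f(s)g(s)\dual$ with two \emph{different} functions $f(s)=\hat U(1-s)$ and $g(s)=\hat U(-s)$, so the quadratic form is $\int_0^1\langle f(s)\dual\xi,\,g(s)\dual\xi\rangle\dd s$, not $\|g(\cdot)\dual\xi\|_{L^2}^2$; the substitution $s\mapsto -s$ does not map $[0,1]$ to itself and does not turn powers of $(1-s)$ into powers of $(-s)$. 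Indeed $V$ is not even symmetric, and its symmetric part is indefinite: in the Kolmogorov case $n=2$, $u_1=u_2=1$, one has
\begin{equation*}
  V=\begin{pmatrix}1&-\tfrac12\\[0.2em]\tfrac12&-\tfrac16\end{pmatrix},
  \qquad
  \xi\dual V\xi=\xi_1^2-\tfrac16\,\xi_2^2\;,
\end{equation*}
which vanishes on the line $\xi_2=\sqrt6\,\xi_1$ even though $V$ is invertible there. So ``$\xi\dual V\xi=0\Rightarrow\xi=0$'' is simply not available, and with it the whole deduction $u_k\dual\xi_k=0$ collapses.

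The missing idea is a factorization that converts $V$ into an honest Gram matrix. The paper constructs the nilpotent ``shift'' matrix $\hat A$ (the strictly sub-diagonal block part of $A$ in the adapted basis) and proves $\hat U(r)=\e^{r\hat A}B$; then $\hat U(1-s)=\e^{\hat A}\hat U(-s)$, hence
\begin{equation*}
  V=\e^{\hat A}\int_0^1\hat U(-s)\hat U(-s)\dual\dd s\;,
\end{equation*}
where the integral \emph{is} of the form $\int_0^1 g(s)g(s)\dual\dd s$ and is positive definite (the paper invokes the Kalman condition for $(\hat A,B)$; alternatively, your two correct observations --- linear independence of the monomials $(-s)^{k-1}$ and full row rank of each $u_k$, which you justify properly from~(\ref{origexp}) and $E_k=E_{k-1}+\operatorname{span}\{A^{k-1}Bv\}$ --- give positive definiteness of this Gramian directly). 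Since $\e^{\hat A}$ is invertible, so is $V$. In short: your Gram-matrix instinct and your rank argument for $u_k$ are the right ingredients, but they only apply after the conjugation by $\e^{\hat A}$, whose construction is the actual content of the paper's proof and is absent from yours.
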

\begin{proof}
  As shown in~\cite[Proposition~2.1]{evolution}, in our chosen basis
  of $\R^d$, the matrix $A$ takes the form of an
  $n\times n$ block matrix whose $(k,l)^{\rm th}$ block element, for
  $k,l\in\{1,\dots,n\}$, is a $(d_k-d_{k-1})\times (d_l-d_{l-1})$
  matrix, where all the blocks with $k\geq l+2$ vanish.
  Let $\hat A$ be an $n\times n$ block matrix of the same block
  structure. We set its block elements to zero unless $k=l+1$, in
  which case we set that block element to equal
  the $(k,l)^{\rm th}$ block element of $A$. By definition of the
  subspace $E_1$ of $\R^d$, we further observe that in our chosen basis,
  for all $j\in\{d_1+1,\dots, d\}$, the $j^{\rm th}$ row of $B$
  vanishes.
  
  For $l\in\{1,\dots,n-1\}$, let $A_l$ denote the
  $(l+1,l)^{\rm th}$ block element of the matrix $A$ and let
  $B_1$ be the $d_1\times m$ matrix obtained by considering the first
  $d_1$ rows of $B$ only. For $k\in\{1,\dots,n\}$, we set
  \begin{equation*}
    \hat E_k=\operatorname{span}
    \left\{\hat A^lBv\colon v\in\R^m,\; 0\leq l\leq k-1\right\}\;.
  \end{equation*}
  By construction of $\hat A$, the $d\times m$ matrix $\hat A^l B$
  is an $n\times 1$ block matrix, whose
  $(k,1)^{\rm th}$ block element
  is a $(d_k-d_{k-1})\times m$ matrix, which
  vanishes unless $k=l+1$, in which case it equals
  $A_l\cdots A_1 B_1$.
  From this form it follows that, in the chosen basis
  $\{e_1,\dots,e_d\}$ of $\R^d$, we have, for all
  $l\in\{0,\dots,n-1\}$ and all $v\in\R^m$,
  \begin{equation}\label{neweq1}
    \left\langle e_j, \hat A^lBv \right\rangle=0
    \quad\mbox{unless }
    j\in\{d_l+1,\dots,d_{l+1}\}\;.
  \end{equation}
  Moreover, for $l\geq n$, we obtain $\hat A^lB=0$, which implies that,
  for $r\in\R$,
  \begin{equation}\label{neweq2}
    \e^{\eps r \hat A}B=
    \sum_{l=0}^{n-1}\frac{\eps^l r^l}{l!}\hat A^lB\;.
  \end{equation}
  Combining (\ref{neweq1}) and (\ref{neweq2}) yields, for all $v\in\R^m$,
  \begin{equation}\label{hatexp}
    \left\langle e_j,\e^{\eps r \hat A}B v\right\rangle=
    \frac{\eps^lr^l}{l!}\left\langle e_j,\hat A^lBv\right\rangle
    \quad\mbox{for } j\mbox{ with } d_l< j\leq d_{l+1}\;.
  \end{equation}
  After understanding $A^lB$ as an $n\times 1$
  block matrix of the same structure as the matrix $\hat A^l B$,
  we further see that the $(l+1,1)^{\rm th}$ block element of
  $A^lB$ also equals $A_l\cdots A_1B_1$. This is a
  consequence of the observation that a block element in $A$ with
  $k\geq l+2$ vanishes. In particular, for $v\in\R^m$ and
  $j$ with $d_l< j\leq d_{l+1}$, we have
  \begin{equation*}
    \left\langle e_j,\hat A^lBv\right\rangle=
    \left\langle e_j,A^lBv\right\rangle\;,
  \end{equation*}
  and~(\ref{origexp}) together with~(\ref{hatexp}) implies that, for
  $r\in\R$,
  \begin{equation*}
    \hat U(r)=\e^{r \hat A}B\;.
  \end{equation*}
  Using~(\ref{uhatjvj}), we conclude that
  \begin{equation*}
    V=\int_0^1\hat U(1-s)\hat U(-s)\dual\dd s=
    \e^{\hat A}\int_0^1\e^{-s \hat A}BB\dual\e^{-s \hat A\dual}\dd s\;.
  \end{equation*}
  Our discussion above shows that $E_k=\hat E_k$, for all
  $k\in\{1,\dots,n\}$, and especially $\hat E_n=\R^d$.
  Therefore, the matrices
  $\hat A$ and $B$ satisfy the Kalman rank condition, which
  ensures that
  \begin{equation*}
    \int_0^1\e^{-s \hat A}BB\dual\e^{-s \hat A\dual}\dd s
  \end{equation*}
  is invertible. Since $\e^{\hat A}$ has the matrix inverse
  $\e^{-\hat A}$, the invertibility of $V$ follows.
\end{proof}
For completeness, we note that Lemma~\ref{invertV} implies that, for
all $k\in\{1,\dots,n\}$, the matrix $u_k$ has maximal rank. If it did
not then, since $d_k-d_{k-1}\leq m$ by construction,
its rows would be linearly dependent leading to $V$
having a collection of linearly dependent rows, which is not possible.
\begin{remark}\label{nilpotent}
  Let $\hat A$ be the $d\times d$ matrix constructed from the matrix
  $A$ as in the previous proof, and let $\hat\lo$ be the operator on $\R^d$
  given by
  \begin{equation*}
    \hat \lo=\sum_{j=1}^d\left(\hat Ax\right)_j\frac{\pt}{\pt x_j}+
    \frac{1}{2}\sum_{j,k=1}^d
    \left(BB\dual\right)_{jk}\frac{\pt^2}{\pt x_j\pt x_k}\;.
  \end{equation*}
  In~\cite{evolution}, the operator $\hat\lo-\frac{\pt}{\pt t}$ is
  called the principal part of $\lo-\frac{\pt}{\pt t}$, and it is
  shown that the fundamental solution with pole at zero 
  of $\lo-\frac{\pt}{\pt t}$ can
  be controlled in terms of the fundamental solution
  with pole at zero of $\hat\lo-\frac{\pt}{\pt t}$,
  cf.~\cite[Theorem~3.1]{evolution}.
  Similarly, let us call $\hat\lo$ the principal part of $\lo$.

  In our model class of hypoelliptic diffusions
  the small-time fluctuations
  for the bridge are given by Theorem~\ref{CLT} in terms of 
  $D_\eps,J_t,\hat U(r)$ and $V$,
  which due to the proof of Lemma~\ref{invertV} can be
  uniquely determined from $\hat A$ and $B$. Therefore, the
  small-time fluctuations for the bridge are fully governed by the
  principal part $\hat\lo$ of the generator $\lo$.
  A similar property was observed in~\cite{MAP}.
\end{remark}
We now proceed with our analysis to find an alternative expression
for $\alpha_t^\eps$.
Since the set of invertible matrices is open, Lemma~\ref{invertV}
shows that, for $\eps>0$ sufficiently small, the inverse of
$V+O\left(\eps\right)$ exists. It satisfies
\begin{equation*}
  \left(V+O\left(\eps\right)\right)^{-1}=
  V^{-1}+O\left(\eps\right)\;.
\end{equation*}
From~(\ref{expGammaexp}) and as $J_1$ equals the identity matrix,
it follows that
\begin{equation*}
  \left(\Gamma_1^\eps\right)^{-1}\e^{-\eps A}=
  D_\eps^{-1} \left(V^{-1}+O\left(\eps\right)\right)D_\eps^{-1}\;,
\end{equation*}
which yields
\begin{equation}\label{exp2}
  \alpha_t^\eps=\e^{\eps tA}\Gamma_t^\eps
  \left(\Gamma_1^\eps\right)^{-1}\e^{-\eps A}=
  D_\eps\left(J_t V J_t V^{-1}+O\left(\eps\right)\right)D_\eps^{-1}\;,
\end{equation}
uniformly in $t\in[0,1]$.
The two estimates~(\ref{exp1}) and (\ref{exp2})
are the essential ingredients for proving
Theorem~\ref{LLN} and Theorem~\ref{CLT}.
\begin{proof}[Proof of Theorem~\ref{CLT}]
  Using~(\ref{ureps}),
  we can rewrite~(\ref{nonscaledfluct}) as
  \begin{equation}\label{rewrite24}
    z_t^\eps(y)-\phi_t^\eps(y)=
    \sqrt{\eps}\left(\int_0^tU^\eps(t-s)\dd W_s-
      \alpha_t^\eps\int_0^1U^\eps(1-s)\dd W_s\right)\;,
  \end{equation}
  and therefore,
  \begin{equation*}
    F_t^\eps=\eps^{-1/2}D_\eps^{-1}\left(z_t^\eps(y)-\phi_t^\eps(y)\right)=
    \int_0^t D_\eps^{-1}U^\eps(t-s)\dd W_s-
    D_\eps^{-1}\alpha_t^\eps D_\eps\int_0^1D_\eps^{-1}U^\eps(1-s)\dd W_s\;.
  \end{equation*}
  The estimate~(\ref{exp1}) gives
  \begin{equation*}
    \sup_{r\in[0,1]}
    \left\|D_\eps^{-1}U^\eps(r)-\hat U(r)\right\|\to 0
    \quad\mbox{as}\quad\eps\to 0\;,
  \end{equation*}
  whereas~(\ref{exp2}) implies that
  \begin{equation*}
    \sup_{t\in[0,1]}
    \left\|D_\eps^{-1}\alpha_t^\eps D_\eps-J_t V J_t V^{-1}\right\|\to 0
    \quad\mbox{as}\quad\eps\to 0\;.
  \end{equation*}
  Hence, the covariances of the
  mean-zero Gaussian processes $(F_t^\eps)_{t\in[0,1]}$
  converge uniformly as $\eps\to 0$ to the covariance of the
  mean-zero Gaussian process $(F_t)_{t\in[0,1]}$
  given by
  \begin{equation*}
    F_t=\int_0^t\hat U(t-s)\dd W_s-J_tVJ_tV^{-1}\int_0^1\hat U(1-s)\dd W_s\;.
  \end{equation*}
  From~\cite[Section~3]{gaussian}, it follows that
  the rescaled
  fluctuations $(F_t^\eps)_{t\in[0,1]}$ indeed converge weakly to
  $(F_t)_{t\in[0,1]}$ as $\eps\to 0$.
\end{proof}
\begin{proof}[Proof of Theorem~\ref{LLN}]
  Since the rescaled fluctuations $(F_t^\eps)_{t\in[0,1]}$ defined by
  \begin{equation*}
    F_t^\eps=\eps^{-1/2}D_\eps^{-1}\left(z_t^\eps(y)-\phi_t^\eps(y)\right)
  \end{equation*}
  converge weakly as $\eps\to 0$ to a well-defined limit process,
  the processes
  $(z_t^\eps(y)-\phi_t^\eps(y))_{t\in[0,1]}$ converge weakly as
  $\eps\to 0$ to the zero process on the set of loops $\Omega^{0,0}$.
\end{proof}
Before we move on to a discussion of four examples in the following
section, we make an 
observation regarding the process
\begin{equation}\label{almostkol}
  \left(\int_0^t\hat U(t-s)\dd W_s\right)_{t\in[0,1]}\;.
\end{equation}
By integration by parts, we have that, for $k\in\N$,
\begin{equation*}
  \int_0^t(t-s)^{k}\dd W_s=k!
  \int_0^t\int_0^{s_{k}}\dots \int_0^{s_2} W_{s_1}\dd s_1\dots \dd s_{k}\;.
\end{equation*}
Thus, the process~(\ref{almostkol})
can be expressed solely in terms of the matrices $u_1,\dots,u_n$ and
an iterated
Kolmogorov diffusion, that is, a standard 
Brownian motion together with a finite number of its iterated time
integrals.
Since the iterated Kolmogorov diffusion arises as a canonical
example, we determine its small-time fluctuations for the bridge in
Section~\ref{ultra}.
The Kolmogorov diffusion
is discussed separately
as a first example in Section~\ref{sec:kol}
as it already exhibits interesting features.

\section{Illustrating examples}
\label{sec:examples}
We discuss four examples which illustrate different aspects of
Theorem~\ref{LLN} and Theorem~\ref{CLT}.

\subsection{Kolmogorov diffusion}
\label{sec:kol}
The Kolmogorov diffusion, named after
Kolmogorov~\cite{kolmogorov}, is the simplest
example of a stochastic process which satisfies a weak H\"ormander
condition but not the strong H\"ormander condition. It is the
diffusion $(x_t)_{t\in[0,1]}$ in $\R^2$ which pairs a standard Brownian
motion $(W_t)_{t\in[0,1]}$ in $\R$ with its time integral, that is,
\begin{equation*}
  x_t=\left(W_t,\int_0^tW_s\dd s\right)\;.
\end{equation*}
It is the unique strong solution to the
stochastic differential equation
\begin{align*}
  \db\left(x_t\right)_1&=\db W_t\;,\\
  \db\left(x_t\right)_2&=\left(x_t\right)_1\dd t\;,
\end{align*}
subject to $x_0=0$. This process falls into our model class of hypoelliptic
diffusions by taking
\begin{equation*}
  A=
  \begin{pmatrix}
    0 & 0\\
    1 & 0
  \end{pmatrix}
  \quad\mbox{and}\quad
  B=
  \begin{pmatrix}
    1 \\ 0
  \end{pmatrix}\;,
\end{equation*}
which corresponds to the operator
\begin{equation*}
  \lo=x_1\frac{\pt}{\pt x_2}+\frac{1}{2}
  \left(\frac{\pt}{\pt x_1}\right)^2
\end{equation*}
on $\R^2$.
The Kalman rank condition~(\ref{kalman}) is satisfied because
\begin{equation*}
  AB=
  \begin{pmatrix}
    0 \\ 1
  \end{pmatrix}
\end{equation*}
implies $E_2=\R^2$. We first use Lemma~\ref{modelbridge} to determine
the expressions for the associated diffusion bridges in small time to
then explicitly see that Theorem~\ref{LLN} and Theorem~\ref{CLT}
hold. For $\eps>0$, the rescaled Kolmogorov diffusion
$(x_t^\eps)_{t\in[0,1]}$ with generator $\eps\lo$ is
given by
\begin{equation*}
  x_t^\eps=\left(\eps^{1/2}W_t,\eps^{3/2}\int_0^tW_s\dd s\right)\;.
\end{equation*}
Since $A^2=0$, we obtain, for $r\in\R$,
\begin{equation*}
  \e^{\eps r A}=I+\eps rA=
  \begin{pmatrix}
    1      & 0 \\
    \eps r & 1
  \end{pmatrix}\;.
\end{equation*}
We further compute, for $t\in[0,1]$,
\begin{equation*}
  \Gamma_t^\eps=\int_0^t\e^{-\eps sA}B B\dual\e^{-\eps s A\dual}\dd s
  =
  \begin{pmatrix}
    t & -\frac{1}{2}\eps t^2\\[0.4em]
    -\frac{1}{2}\eps t^2 & \frac{1}{3}\eps^2 t^3
  \end{pmatrix}\;.
\end{equation*}
It follows that
\begin{equation*}
  \e^{\eps tA}\Gamma_t^\eps=
  \begin{pmatrix}
    t & -\frac{1}{2}\eps t^2\\[0.4em]
    \frac{1}{2}\eps t^2 & -\frac{1}{6}\eps^2 t^3
  \end{pmatrix}
  \quad\mbox{and}\quad
  \left(\Gamma_1^\eps\right)^{-1}\e^{-\eps A}=
  \begin{pmatrix}
    -2 & 6 \eps^{-1}\\[0.4em]
    -6 \eps^{-1} & 12\eps^{-2}
  \end{pmatrix}\;,
\end{equation*}
which implies
\begin{equation*}
  \alpha_t^\eps=\e^{\eps tA}\Gamma_t^\eps\left(\Gamma_1^\eps\right)^{-1}
  \e^{-\eps A}=
  \begin{pmatrix}
    3t^2-2t & \left(6t-6t^2\right)\eps^{-1} \\[0.4em]
    \left(t^3-t^2\right)\eps & 3t^2-2t^3
  \end{pmatrix}\;.
\end{equation*}
Thus, by Lemma~\ref{modelbridge} and for $y=(a,b)\in\R^2$, the process
$(z_t^\eps(a,b))_{t\in[0,1]}$ in $\R^2$ given by
\begin{align*}
  \left(z_t^{\eps}(a,b)\right)_1
  &=\left(3t^2-2t\right)a+\left(6t-6t^2\right)\frac{b}{\eps}+
    \eps^{1/2}\left(W_t-\left(3t^2-2t\right)W_1-
    \left(6t-6t^2\right)\int_0^1W_s\dd s\right)\;,\\
  \left(z_t^{\eps}(a,b)\right)_2
  &=\left(t^3-t^2\right)a\eps+\left(3t^2-2t^3\right)b+
    \eps^{3/2}\left(\int_0^tW_s\dd s-\left(t^3-t^2\right)W_1-
    \left(3t^2-2t^3\right)\int_0^1W_s\dd s\right)
\end{align*}
has the same law as the rescaled Kolmogorov diffusion
$(x_t^\eps)_{t\in[0,1]}$  conditioned on $x_1^\eps=(a,b)$. From the
explicit expression, it follows that the processes
\begin{equation}\label{koldiff}
  \left(\left(z_t^{\eps}(a,b)\right)_1-
    \left(3t^2-2t\right)a-\left(6t-6t^2\right)\frac{b}{\eps},
    \left(z_t^{\eps}(a,b)\right)_2-
  \left(t^3-t^2\right)a\eps-\left(3t^2-2t^3\right)b\right)_{t\in[0,1]}
\end{equation}
converge weakly as $\eps\to 0$ to the zero process. This is consistent
with Theorem~\ref{LLN} because for the Kolmogorov diffusion starting
from $x=0$, we have
\begin{equation*}
  \phi_t^\eps(y)=\alpha_t^\eps y=
  \left(\left(3t^2-2t\right)a+\left(6t-6t^2\right)\frac{b}{\eps},
  \left(t^3-t^2\right)a\eps+\left(3t^2-2t^3\right)b\right)\;.
\end{equation*}
We note that while the path
$\left(\phi_t^\eps(y)\right)_{t\in[0,1]}$ is well-defined for each
$\eps>0$, its first component blows up as $\eps\to 0$, unless $b=0$.
From the above expression for a Kolmogorov bridge from
$0$ to $(a,b)$ in small time, we further see that rescaling the
processes~(\ref{koldiff}) by $\eps^{1/2}$ in the first
component and by $\eps^{3/2}$ in the second component leads to the
fluctuation process which, at $t\in[0,1]$, is given as
\begin{equation*}
  \left(W_t-\left(3t^2-2t\right)W_1-
    \left(6t-6t^2\right)\int_0^1W_s\dd s,
  \int_0^tW_s\dd s-\left(t^3-t^2\right)W_1-
    \left(3t^2-2t^3\right)\int_0^1W_s\dd s\right)\;.
\end{equation*}
Since this coincides with the expression for
$z_t^1(0)$, the resulting limit
fluctuations are equal in law to a Kolmogorov
bridge from $0$ to $0$ in time~$1$.
Below we conclude that this is also what is given to us by
Theorem~\ref{CLT}. For $r\in\R$, we have
\begin{equation*}
  U^\eps(r)= \e^{\eps r A}B=
  \begin{pmatrix}
    1      \\ \eps r
  \end{pmatrix}\;,
\end{equation*}
which is of the form~(\ref{urepsform})
with $u_1=u_2=1$. In particular, we
already work in a suitable basis. The rescaling map
$D_\eps$ is then
\begin{equation}\label{kolD}
  D_\eps=
  \begin{pmatrix}
    1 & 0 \\ 0 & \eps
  \end{pmatrix}\;.
\end{equation}
Since we consider the rescaled fluctuations $(F_t^\eps)_{t\in[0,1]}$
defined by~(\ref{refluct}) this corresponds to rescaling the
first component by $\eps^{1/2}$ and the second component by
$\eps^{3/2}$, as above. We further obtain that
\begin{equation}\label{kolJ}
  J_t=
  \begin{pmatrix}
    t^{1/2} & 0 \\ 0 & t^{3/2}
  \end{pmatrix}\;,\quad  
  \hat U(r)=
  \begin{pmatrix}
    1 \\ r
  \end{pmatrix}
  \quad\mbox{and}\quad
  V=
  \begin{pmatrix}
    1 & -\frac{1}{2} \\[0.4em] \frac{1}{2} & -\frac{1}{6}
  \end{pmatrix}\;. 
\end{equation}
By integration by parts, we have
\begin{equation}\label{kolhatU}
  \int_0^t\hat U(t-s)\dd W_s=
  \left(W_t,\int_0^t(t-s)\dd W_s\right)=
  \left(W_t,\int_0^tW_s\dd s\right)\;.
\end{equation}
This together with the computation
\begin{equation}\label{kolcov}
  J_t V J_t V^{-1}=
  \begin{pmatrix}
    t & -\frac{1}{2} t^2\\[0.4em]
    \frac{1}{2} t^2 & -\frac{1}{6} t^3
  \end{pmatrix}
  \begin{pmatrix}
    -2 & 6 \\[0.4em]
    -6 & 12
  \end{pmatrix}= 
  \begin{pmatrix}
    3t^2-2t & 6t-6t^2 \\[0.4em]
    t^3-t^2 & 3t^2-2t^3
  \end{pmatrix}
\end{equation}
shows that Theorem~\ref{CLT} indeed yields the same small-time
fluctuations for a Kolmogorov bridge as derived above. Irrespective of
the initial and final positions,
the small-time fluctuations
are equal in law to a
Kolmogorov bridge from $0$ to $0$ in time $1$.

\subsection{Ornstein-Uhlenbeck process paired with its area}
\label{sec:OU}
Performing the small-time analysis for the bridge of an
Ornstein-Uhlenbeck process paired with its area demonstrates that
Theorem~\ref{CLT} can greatly simplify the 
determination of the small-time fluctuations for the bridge.
Let $(W_t)_{t\in[0,1]}$ be a standard Brownian motion in $\R$
and fix $x\in\R^2$.
We consider
the diffusion $(x_t)_{t\in[0,1]}$ in $\R^2$
which is the
unique strong solution to the stochastic differential
equation
\begin{align*}
  \db \left(x_t\right)_1&=-\left(x_t\right)_1\dd t +\db W_t\;,\\
  \db \left(x_t\right)_2&=\left(x_t\right)_1\dd t\;,
\end{align*}
subject to the initial condition
$x_0=x$. This corresponds to the choice
\begin{equation*}
  A=
  \begin{pmatrix}
    -1 & 0\\
    1        & 0
  \end{pmatrix}
  \quad\mbox{and}\quad
  B=
  \begin{pmatrix}
    1 \\ 0
  \end{pmatrix}
\end{equation*}
in our model class of diffusion processes. The matrices $A$ and $B$
satisfy condition~(\ref{kalman}) since
\begin{equation*}
  \operatorname{span}\left\{
  \begin{pmatrix}
    1 \\ 0
  \end{pmatrix},
  \begin{pmatrix}
    -1 & 0\\
    1        & 0
  \end{pmatrix}
  \begin{pmatrix}
    1 \\ 0
  \end{pmatrix}  
  \right\}=\R^2\;.
\end{equation*}
In the following, we first use Lemma~\ref{modelbridge} to find
explicit expressions
for the corresponding
bridge processes in small time to then determine the small-time
fluctuations for the bridge by hand, before we show that
Theorem~\ref{CLT} greatly simplifies the analysis.
Using $A^k=(-1)^{k-1}A$ for $k\in\N$, we compute, for $\eps>0$
and $r\in\R$,
\begin{equation*}
  \e^{\eps rA}=
  \begin{pmatrix}
    \e^{-\eps r} & 0\\[0.4em]
    1-\e^{-\eps r} & 1
  \end{pmatrix}\;.
\end{equation*}
It follows that, for $t\in[0,1]$,
\begin{equation*}
  \Gamma_t^\eps=
  \begin{pmatrix}
    \frac{1}{2\eps}\left(\e^{2\eps t}-1\right) &
    -\frac{1}{2\eps}\left(\e^{\eps t}-1\right)^2 \\[0.4em]
    -\frac{1}{2\eps}\left(\e^{\eps t}-1\right)^2 &
    \frac{1}{2\eps}\left(\e^{2\eps t}-4\e^{\eps t}+2\eps t +3\right)
  \end{pmatrix}\;,
\end{equation*}
which yields
\begin{equation*}
  \e^{\eps tA}\Gamma_t^\eps=
  \begin{pmatrix}
    \frac{1}{2\eps}\left(\e^{\eps t}-\e^{-\eps t}\right) &
    -\frac{1}{2\eps}\left(\e^{\eps t}+\e^{-\eps t}-2\right)\\[0.4em]
    \frac{1}{2\eps}\left(\e^{\eps t}+\e^{-\eps t}-2\right) &
    -\frac{1}{2\eps}\left(\e^{\eps t}-\e^{-\eps t}-2\eps t\right)
  \end{pmatrix}\;.
\end{equation*}
A straightforward but
elaborate calculation shows that, for $t\in[0,1]$, the matrix
$\alpha_t^\eps$ is given by
\begin{align*}
  \left(\alpha_t^{\eps}\right)_{11}
  &=\frac{\left(1-\e^{-\eps t}\right)\left((\eps-1)\e^{\eps(1+t)}+\e^{\eps t}+
                       (\eps+1)\e^\eps-\e^{2\eps}\right)}
    {\left(\e^\eps-1\right)\left((\eps-2)\e^\eps+\eps+2\right)}\;,\\
  \left(\alpha_t^{\eps}\right)_{12}
  &=\frac{\e^{-\eps}-\e^{-\eps(1-t)}-\e^{-\eps t}+
    1}{(\eps+2)\e^{-\eps}+\eps-2}\;,\\
  \left(\alpha_t^{\eps}\right)_{21}
  &=\frac{\e^{2\eps}-1+(\eps +1)\e^{\eps(1-t)}+\e^{\eps t}+(\eps-1)\e^{\eps(1+t)}
          -\eps t\left(\e^\eps-1\right)^2-2\eps\e^{\eps}-\e^{\eps(2-t)}}
    {\left(\e^\eps-1\right)\left((\eps-2)\e^\eps+\eps+2\right)}\;,\\
  \left(\alpha_t^{\eps}\right)_{22}
  &=\frac{\e^{-\eps t}-\e^{-\eps(1-t)}+(\eps t +1) \e^{-\eps}
          +\eps t -1}{(\eps+2)\e^{-\eps}+\eps-2}\;.
\end{align*}
By Lemma~\ref{modelbridge}, this gives an explicit expression
for the bridge of the considered Ornstein-Uhlenbeck process
paired with its area. Repeatedly applying l'H\^opital's rule, we
see that, as $\eps\to 0$,
\begin{align*}
  \left(\alpha_t^{\eps}\right)_{11}
  &=3t^2-2t+O\left(\eps^2\right)\;,\\
  \left(\alpha_t^{\eps}\right)_{12}
  &=\frac{6t-6t^2}{\eps}+O\left(\eps\right)\;,\\
  \left(\alpha_t^{\eps}\right)_{21}
  &=\left(t^3-t^2\right)\eps+O\left(\eps^3\right)\;,\\
  \left(\alpha_t^{\eps}\right)_{22}
  &=3t^2-2t^3+O\left(\eps^2\right)\;,
\end{align*}
uniformly in $t\in[0,1]$.
From a comparison to the expressions we obtained in the small-time
analysis for the Kolmogorov diffusion in Section~\ref{sec:kol}, we
deduce that the Ornstein-Uhlenbeck process paired with its
area exhibits the same small-time fluctuations for the bridge as the
Kolmogorov diffusion. This follows much more easily by
applying Theorem~\ref{CLT}. For $r\in\R$, we have
\begin{equation*}
  U^\eps(r)=
  \begin{pmatrix}
    \e^{-\eps r} \\ 1-\e^{-\eps r}
  \end{pmatrix}=
  \begin{pmatrix}
    1 \\ \eps r
  \end{pmatrix}+
  \begin{pmatrix}
    O\left(\eps\right) \\ O\left(\eps^2\right)
  \end{pmatrix}\;.
\end{equation*}
We see that $U^\eps(r)$ is of the form~(\ref{urepsform})
with $u_1=u_2=1$. Hence, the rescaling matrix $D_\eps$ as well as $J_t,
\hat U(r)$ and $V$ are again given by~(\ref{kolD})
as well as~(\ref{kolJ}).
Similarly, the quantities~(\ref{kolhatU}) and (\ref{kolcov}), which
characterise the small-time fluctuations uniquely, remain unchanged.
Thus, as a result of giving rise to the same $\hat U(r)$ for all
$r\in\R$, 
the Kolmogorov diffusion and the Ornstein-Uhlenbeck process paired
with its area exhibit the same small-time
fluctuations for the bridge.

For $x=0$
there is another interesting observation we can make in
regards to these two processes,
which is a consequence of certain terms vanishing in the
Laurent expansion of $\alpha_t^\eps$ in $\eps\to 0$. If we consider
the path $(\psi_t^\eps(y))_{t\in[0,1]}$
defined by, for $y=(a,b)$,
\begin{equation*}
  \psi_t^\eps(y)=\left(
    \left(3t^2-2t\right)a+\left(6t-6t^2\right)\frac{b}{\eps},
    \left(t^3-t^2\right)a\eps+\left(3t^2-2t^3\right)b
  \right)
\end{equation*}
then this is sufficient to compensate for the blow-up behaviour in the
bridge process $(z_t^\eps(y))_{t\in[0,1]}$ as $\eps\to 0$, and the
two processes
\begin{equation*}
  \left(\eps^{-1/2}D_\eps^{-1}
    \left(z_t^\eps(y)-\phi_t^\eps(y)\right)\right)_{t\in[0,1]}
  \quad\mbox{and}\quad
  \left(\eps^{-1/2}D_\eps^{-1}
    \left(z_t^\eps(y)-\psi_t^\eps(y)\right)\right)_{t\in[0,1]}
\end{equation*}
have the same limit process as $\eps\to 0$.
Since the approximate minimal-like path $(\psi_t^\eps(y))_{t\in[0,1]}$
for the current example with $x=0$
coincides with the minimal-like path for a Kolmogorov bridge from $0$
to $y$ in small time, not only the small-time fluctuations
for the bridge but also a sufficiently good approximation of the 
minimal-like path is given in terms of the Kolmogorov diffusion.
Though, as shown in the next example, 
the latter need not hold for two processes which admit the same
$n\in\N$ and $u_1,\dots,u_n$.
\subsection{Compensating for blow-ups in the bridge process}
\label{sec:diffmin}
While the small-time
fluctuations for the bridge are uniquely
determined in terms of the matrices $u_1,\dots,u_n$, we present an
example which shows that knowledge of $u_1,\dots,u_n$ is not
sufficient to construct a path which approximates
the minimal-like path well enough to recover the limit
fluctuations as in the previous section.
We consider the hypoelliptic diffusion corresponding to
the matrices
\begin{equation*}
  A=
  \begin{pmatrix}
    -1 & 0 \\
    1  & 2 \\
  \end{pmatrix}
  \quad\mbox{and}\quad
  B=
  \begin{pmatrix}
    1 \\ 0
  \end{pmatrix}\;,
\end{equation*}  
which satisfy condition~(\ref{kalman}).
Using the eigendecomposition of $A$, we obtain,
for $\eps>0$ and $r\in\R$,
\begin{equation*}
  \e^{\eps rA}=
  \begin{pmatrix}
    \e^{-\eps r} & 0\\[0.4em]
    \frac{1}{3}\left(\e^{2\eps r}-\e^{-\eps r}\right) & \e^{2\eps r}
  \end{pmatrix}
\end{equation*}
as well as
\begin{equation*}
  U^\eps(r)=
  \begin{pmatrix}
    \e^{-\eps r} \\[0.4em]
    \frac{1}{3}\left(\e^{2\eps r}-\e^{-\eps r}\right)
  \end{pmatrix}=
  \begin{pmatrix}
    1 \\[0.4em] \eps r
  \end{pmatrix}+
  \begin{pmatrix}
    O\left(\eps\right) \\[0.4em] O\left(\eps^2\right)
  \end{pmatrix}\;,
\end{equation*}
uniformly in $r$ on compact intervals. Thus, as for the
Ornstein-Uhlenbeck process paired with its area and the
Kolmogorov diffusion, $U^\eps(r)$ is of the form~(\ref{urepsform})
with $u_1=u_2=1$. By Theorem~\ref{CLT},
these three processes exhibit the same
small-time fluctuations for the bridge.
We further compute that, for $t\in[0,1]$,
\begin{equation*}
  \e^{\eps tA}\Gamma_t^\eps=
  \begin{pmatrix}
    \frac{1}{2\eps}\left(\e^{\eps t}-\e^{-\eps t}\right) &
    -\frac{1}{6\eps}\left(2\e^{-2\eps t}-3\e^{-\eps t}+\e^{\eps t}\right)\\[0.5em]
    \frac{1}{6\eps}\left(2\e^{2\eps t}-3\e^{\eps t}+\e^{-\eps t}\right) &
    -\frac{1}{12\eps}
    \left(\e^{2\eps t}-2\e^{\eps t}+2\e^{-\eps t}-\e^{-2\eps t}\right)
  \end{pmatrix}\;,
\end{equation*}
which has the expansion
\begin{equation*}
  \e^{\eps tA}\Gamma_t^\eps=
  \begin{pmatrix}
    t+\frac{1}{6}\eps^2 t^3+O\left(\eps^4\right) &
    -\frac{1}{2}\eps t^2+\frac{1}{3}\eps^2t^3+O\left(\eps^3\right)\\[0.5em]
    \frac{1}{2}\eps t^2+\frac{1}{3}\eps^2t^3+O\left(\eps^3\right) &
    -\frac{1}{6}\eps^2t^3-\frac{1}{24}\eps^4 t^5+O\left(\eps^6\right)
  \end{pmatrix}\;,
\end{equation*}
uniformly in $t\in[0,1]$. Setting
\begin{equation*}
  R=
  \begin{pmatrix}
    0 & \frac{1}{3}\\[0.4em]
    \frac{1}{3} & 0
  \end{pmatrix}
\end{equation*}
and with
\begin{equation*}
  D_\eps=
  \begin{pmatrix}
    1 & 0\\
    0 & \eps
  \end{pmatrix}\;,\quad
  J_t=
  \begin{pmatrix}
    t^{1/2} & 0\\
    0 & t^{3/2}
  \end{pmatrix}
  \quad\mbox{as well as}\quad
  V=
  \begin{pmatrix}
    1 & -\frac{1}{2}\\[0.4em]
    \frac{1}{2} & -\frac{1}{6}
  \end{pmatrix}\;,
\end{equation*}
we have
\begin{equation*}
  \e^{\eps tA}\Gamma_t^\eps=
  D_\eps J_t\left(V+\eps t R+O\left(\eps^2\right)\right)J_t D_\eps\;,
\end{equation*}
uniformly in $t\in[0,1]$.
Let $I$ denote the $2\times 2$ identity matrix.
Since $V$ is invertible, we deduce that, for $\eps>0$
sufficiently small,
\begin{equation*}
  \left(V+\eps R+O\left(\eps^2\right)\right)^{-1}=
  \left(I+\eps V^{-1}R+O\left(\eps^2\right)\right)^{-1}V^{-1}=
  V^{-1}-\eps V^{-1}RV^{-1}+O\left(\eps^2\right)\;,
\end{equation*}
and therefore, due to $J_1=I$,
\begin{equation*}
  \left(\Gamma_1^\eps\right)^{-1}\e^{-\eps A}=
  D_\eps^{-1}\left(V^{-1}-
    \eps V^{-1}RV^{-1}+O\left(\eps^2\right)\right) D_\eps^{-1}\;.  
\end{equation*}
This implies that
\begin{align*}
  \alpha_t^\eps
  &=D_\eps J_t\left(V+\eps t R+O\left(\eps^2\right)\right)J_t
    \left(V^{-1}-\eps V^{-1}RV^{-1}+O\left(\eps^2\right)\right) D_\eps^{-1}\\
  &=D_\eps\left(J_tVJ_tV^{-1} + \eps t J_tRJ_tV^{-1} -
    \eps J_tV J_t V^{-1}RV^{-1} +O\left(\eps^2\right)\right) D_\eps^{-1}\;.
\end{align*}
We compute
\begin{equation*}
  J_tVJ_tV^{-1}=
  \begin{pmatrix}
    3t^2-2t & 6t-6t^2\\[0.4em]
    t^3-t^2 & 3t^2-2t^3
  \end{pmatrix}
\end{equation*}
as well as
\begin{equation*}
  \eps t J_tRJ_tV^{-1}=
  \begin{pmatrix}
    -2\eps t^3 & 4\eps t^3\\[0.4em]
    -\frac{2}{3}\eps t^3 & 2\eps t^3
  \end{pmatrix}
  \quad\mbox{and}\quad
  \eps J_tV J_t V^{-1}RV^{-1}=
  \begin{pmatrix}
    -2\eps t^2 & 4\eps t\\[0.4em]
    -\frac{2}{3}\eps t^3 & 2\eps t^2
  \end{pmatrix}\;,
\end{equation*}
which yields
\begin{equation*}
  \alpha_t^\eps=
  \begin{pmatrix}
    3t^2-2t + \left(2t^2-2t^3\right)\eps + O\left(\eps^2\right) &
    \left(6t-6t^2\right)\eps^{-1} + 4t^3-4t + O\left(\eps\right)\\[0.4em]
    \left(t^3-t^2\right)\eps + O\left(\eps^3\right) &
    3t^2-2t^3+\left(2t^3-2t^2\right)\eps + O\left(\eps^2\right)
  \end{pmatrix}\;.
\end{equation*}
In particular, for $y=(a,b)$, we obtain
\begin{equation*}
  \alpha_t^\eps y=
  \begin{pmatrix}
    3t^2-2t & \left(6t-6t^2\right)\eps^{-1}\\[0.4em]
    \left(t^3-t^2\right)\eps & 3t^2-2t^3
  \end{pmatrix}
  \begin{pmatrix}
    a \\[0.4em] b
  \end{pmatrix}+
  \begin{pmatrix}
    \left(4t^3-4t\right)b\\[0.4em] \left(2t^3-2t^2\right) b\eps
  \end{pmatrix}+
  \begin{pmatrix}
    O\left(\eps\right) \\[0.4em] O\left(\eps^2\right)
  \end{pmatrix}\;.
\end{equation*}
It follows that in our current example
for an approximate minimal-like path to
lead to well-defined small-time fluctuations
for the bridge from $x=0$ to $y=(a,b)$ 
with respect to the
rescaling $D_\eps$, we have to at least subtract the path
\begin{equation*}
  \left(
  \left(3t^2-2t\right)a + \left(6t-6t^2\right)\frac{b}{\eps} +
  \left(4t^3-4t\right)b,
  \left(t^3-t^2\right) a\eps +
  \left(3t^2-2t^3\right)b+\left(2t^3-2t^2\right)b\eps\right)_{t\in[0,1]}\;.
\end{equation*}
This differs from the minimal-like path
$(\phi_t^\eps(y))_{t\in[0,1]}$ considered for the
Kolmogorov diffusion, and the
approximate minimal-like path
$(\psi_t^\eps(y))_{t\in[0,1]}$ found
for the Ornstein-Uhlenbeck process paired with its area
starting from $0$.
\subsection{Iterated Kolmogorov diffusion}
\label{ultra}
The diffusions studied in Section~\ref{sec:OU} and
Section~\ref{sec:diffmin} both
exhibit the same small-time fluctuations
for the bridge as the Kolmogorov
diffusion. Similarly, there is a family of diffusions which
all have the same small-time fluctuations for the bridge as the
iterated Kolmogorov diffusion, that is, a standard
Brownian motion together with a
finite number of its iterated time integrals. Banerjee and
Kendall~\cite{kendall_couple} study maximal and efficient couplings for
iterated Kolmogorov diffusions, and Baudoin, Gordina and
Mariano~\cite{kolgradbounds} obtain gradient bounds for this
hypoelliptic diffusion.
We close by explicitly
determining the small-time fluctuations for the
bridge of an iterated Kolmogorov diffusion.
By the independence of the components of a Brownian motion in $\R^m$,
it is sufficient to focus on a standard
Brownian motion in $\R$ and its iterated time integrals.
In our model class, this diffusion corresponds to the
choice of the $d\times d$ matrix $A$ and the $d\times 1$ matrix $B$,
understood as a column vector, whose
entries are, for $i,j\in\{1,\dots,d\}$,
\begin{equation*}
  A_{ij}=
  \begin{cases}
    1 & \mbox{if } i=j+1\\
    0 & \mbox{otherwise}
  \end{cases}
  \quad\mbox{and}\quad
  B_{i}=
  \begin{cases}
    1 & \mbox{if } i=1\\
    0 & \mbox{otherwise}
  \end{cases}\;.
\end{equation*}
With $\lo$ on $\R^d$ given by~(\ref{model}), the operator
$\lo-\frac{\pt}{\pt t}$ is a
strongly degenerate ultraparabolic operator.
For $k\in\N$, we have
\begin{equation*}
  \left(A^k\right)_{ij}=
  \begin{cases}
    1 & \mbox{if } i=j+k\\
    0 & \mbox{otherwise}
  \end{cases}\;.
\end{equation*}
This yields
\begin{equation*}
  \operatorname{span}\left\{
    B,AB,\dots,A^{d-1}B\right\}=\R^d\;,
\end{equation*}
that is, the Kalman rank condition~(\ref{kalman}) is satisfied.
Moreover, we obtain $n=d$
since $\R^d$ cannot be spanned by less than $d$ vectors.
From $A^k=0$ for $k\geq d$, it follows that, for $r\in\R$,
\begin{equation*}
  \e^{\eps rA}=
  \sum_{k=0}^{d-1}\frac{\eps^k r^k}{k!}A^k\;,
\end{equation*}
which implies
\begin{equation*}
  \left(U^\eps(r)\right)_i=\left(\e^{\eps rA}B\right)_i=
  \frac{\eps^{i-1}r^{i-1}}{(i-1)!}\;.
\end{equation*}
Hence, $U^\eps(r)$ is of the form~(\ref{urepsform}) with
\begin{equation*}
  u_i=\frac{1}{(i-1)!}\;.
\end{equation*}
In the current example, the matrices
$D_\eps$ and $J_t$ are the $d\times d$ diagonal matrices, whose
$i^{\rm th}$ diagonal element equals $\eps^{i-1}$ and $t^{i-1/2}$,
respectively. We further see that $V$ has the entries
\begin{equation*}
  V_{ij}=(-1)^{j+1}u_iu_j\frac{(i-1)!\,(j-1)!}{(i+j-1)!}=
  (-1)^{j+1}\frac{1}{(i+j-1)!}\;.
\end{equation*}
Let $H$ be the $d\times d$ Hankel matrix defined by
\begin{equation*}
  H_{ij}=\frac{1}{(i+j-1)!}\;,
\end{equation*}
and let $S$ be the $d\times d$ diagonal matrix whose
$i^{\rm th}$ diagonal element equals $(-1)^{i+1}$. Due to
\begin{equation*}
  (HS)_{ij}=\sum_{k=1}^d H_{ik}S_{kj}=H_{ij}S_{jj}=
  (-1)^{j+1}\frac{1}{(i+j-1)!}=V_{ij}\;,
\end{equation*}
for all $i,j\in\{1,\dots,d\}$,
we have $V=HS$. Since $S^{-1}=S$, it follows that
\begin{equation*}
  V^{-1}=SH^{-1}\;.
\end{equation*}
Using my formula for the
inverse of the factorial Hankel matrix $H$,
see~\cite[Theorem~1.1]{inverthankel}, we obtain
\begin{align*}
  \left(V^{-1}\right)_{ij}
  &=\sum_{k=1}^dS_{ik}\left(H^{-1}\right)_{kj}
    =(-1)^{i+1}\left(H^{-1}\right)_{ij}\\
  &=(-1)^{d+j}(i-1)!\,j!\,\binom{d-1}{i-1}\binom{d+j-1}{j}
    \sum_{k=0}^{i-1}\binom{d-i+k}{j-1}\binom{d+k-1}{k}\;.
\end{align*}
We further compute that
\begin{align*}
  &\left(J_tVJ_tV^{-1}\right)_{ij}\\
  &\qquad=\sum_{l=1}^d t^{i-1/2}V_{il}t^{l-1/2}\left(V^{-1}\right)_{lj}\\
  &\qquad=\sum_{l=1}^d(-1)^{d+j+l+1}\frac{(l-1)!\,j!}{(i+l-1)!}
    \binom{d-1}{l-1}\binom{d+j-1}{j}
    \sum_{k=0}^{l-1}\binom{d-l+k}{j-1}\binom{d+k-1}{k} t^{i+l-1}\;.
\end{align*}
As $\hat U(r)$ has the entries
\begin{equation*}
  \left(\hat U(r)\right)_i=r^{i-1}u_i=\frac{r^{i-1}}{(i-1)!}\;,
\end{equation*}
we see, by integration by parts, that the process
\begin{equation*}
  \left(\int_0^t\hat U(t-s)\dd W_s\right)_{t\in[0,1]}
\end{equation*}
is again the iterated Kolmogorov diffusion.
Using Theorem~\ref{CLT}, this observation
and the formula for $J_tVJ_t V^{-1}$ together
give an explicit expression of 
the small-time fluctuations for the bridge of an
iterated Kolmogorov diffusion. Moreover, since
$U^\eps(r)=D_\eps\hat U(r)$ for $r\in\R$, these small-time
fluctuations are equal in law to the bridge from $0$ to $0$ in time
$1$ of an iterated Kolmogorov diffusion with the same number of
iterated time integrals.
\bibliographystyle{plain}
\bibliography{references}

\end{document}